\newtheorem{thm}{Theorem}[section]
\newtheorem{Theorem}[thm]{Theorem}
\newtheorem{lemma}{Lemma}[section]
\newtheorem{corol}{Corollary}[section]
\newtheorem{prop}{Proposition}[section]
\newtheorem{rmk}{Remark}[section]
\numberwithin{equation}{section}
\newcommand{\af}{\mathfrak{a}}
\newcommand{\bfrac}[2]{\left(\frac{#1}{#2}\right)}
\newcommand{\tRe}{\textup{Re}}
\begin{document}

\title{Conditional bounds for the least quadratic non-residue and related problems}

\author{Youness Lamzouri}
\address{Department of Mathematics and Statistics, York University,
4700 Keele Street,
Toronto, ON, M3J1P3}
\email{lamzouri@mathstat.yorku.ca}

\author{Xiannan Li}
\address{Department of Mathematics, University of Illinois at Urbana-Champaign,
1409 W. Green Street,
Urbana, IL, 61801}
\email{xiannan@illinois.edu}

\author{Kannan Soundararajan}
\address{Department of Mathematics, Stanford University, Stanford, CA 94305}
\email{ksound@math.stanford.edu}

\subjclass[2000]{Primary 11N60; Secondary 11R42.}

\date{\today}
\begin{abstract} This paper studies explicit and theoretical bounds for several interesting quantities in number theory, conditionally on the Generalized Riemann Hypothesis.  Specifically, we improve the existing explicit bounds for the least quadratic non-residue and the least prime in an arithmetic progression. We also refine the classical conditional bounds of Littlewood for $L$-functions at $s=1$. In particular, we derive explicit upper and lower bounds for $L(1,\chi)$ and $\zeta(1+it)$, and deduce explicit bounds for the class number of imaginary quadratic fields.  Finally, we improve the best known theoretical bounds for the least quadratic non-residue, and more generally, the least $k$-th power non-residue.
\end{abstract}

\thanks{The first author is supported in part by an NSERC Discovery grant. The third author is supported in part by NSF grant DMS-1001068, and a Simons Investigator grant from the Simons Foundation}

\maketitle

\section{Introduction}

\noindent  Let $q>1$ be a natural number and let $G=({\Bbb Z}/q{\Bbb Z})^*$ denote the group 
of reduced residues $\pmod q$.  Given a proper subgroup $H$ of $G$,  
two natural and interesting questions in number theory are: 
\begin{enumerate}
\item Determine or estimate the 
least prime $p$ not dividing $q$ and lying outside the subgroup $H$, and
\item Given a 
coset of $H$ in $G$, determine or estimate the least prime $p$ lying in that coset.  
\end{enumerate}
These problems have a long history, and have attracted the attention of many 
mathematicians.  There are two particular cases that are especially well known.  If $q$ is 
prime and $H$ is the group of quadratic residues the first problem amounts to the 
famous question of Vinogradov on the least quadratic non-residue.  For any 
$q$, if  $H=\{1\}$ is the trivial subgroup, then the second problem is that 
of estimating the least prime in an arithmetic progression.   In this paper we assume the 
truth of the Generalized Riemann Hypothesis and obtain explicit as well as asymptotic bounds 
on these problems.  Our work improves upon previous results in this 
area, notably the works of Bach \cite{Bach1}, and Bach and Sorenson \cite{Bach2}.

\subsection{The least prime outside a subgroup} Assuming GRH, Ankeny 
\cite{Ankeny} established that the least prime lying outside a proper 
subgroup $H$ of $({\Bbb Z}/q{\Bbb Z})^*$ is $O((\log q)^2)$.  Ankeny's work was refined by Bach \cite{Bach1}, 
who gave explicit as well as asymptotic 
bounds for the least such prime.  Thus Bach showed, on GRH, that the least prime $p$ 
that does not lie in $H$ is less than $2 (\log q)^2$, and if  the 
additional natural requirement that $p \nmid q$ is imposed then $p$ is less than $3 (\log q)^2$.  
He further established that the least prime $p\nmid q$ with $p$ not lying 
in $H$ satisfies the asymptotic bound $p\le (1+o(1)) (\log q)^2$.   In the 
special case when $H$ consists of the square residue classes $\pmod q$, 
Wedeniwski \cite{Wed} has recently given the improved explicit bound 
$\frac{3}{2}(\log q)^2 -  \frac{44}{5} \log q + 13$.   
We begin with an explicit 
result on this question.  We shall assume that $q\ge 3000$, since questions 
for smaller values of $q$ may easily be settled by direct calculation. As usual
we denote by $\omega(q)$ the number of distinct prime factors of $q$. 

 \begin{Theorem}\label{thmexplicit}
Assume GRH. Let $q\geq 3000$ be an integer, and let $H$ be a proper 
subgroup of $G=({\Bbb Z}/q{\Bbb Z})^*$.
\begin{enumerate}
\item Let $A(q) = \max(0, 2\log \log q -8/5 - \sum_{p|q} (\log p)/(p-1))$, and 
put $$B(q) = \max(0, 2\log \log q + 3 + 2\omega(q)(\log \log q)^2/\log q - 2A(q)).$$    The least prime $\ell$ with $\ell \nmid q$ and $\ell$ not lying 
in the subgroup $H$ satisfies the bound 
$\ell \le (\log q+ B(q))^2$.  
\item Suppose $q$ is not divisible by any prime below $(\log q)^2$.  
Then there is a prime $\ell \le (\log q)^2$ with $\ell$ not lying in the 
subgroup $H$.  
\end{enumerate}
\end{Theorem}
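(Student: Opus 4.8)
My plan is to reduce the problem to a character sum estimate. Since $H$ is a proper subgroup of $G = (\mathbb{Z}/q\mathbb{Z})^*$, there exists a nontrivial character $\chi$ modulo $q$ that is trivial on $H$; in fact the characters trivial on $H$ form the annihilator subgroup, which has order $[G:H] \geq 2$. If every prime $\ell \leq x$ with $\ell \nmid q$ were to lie in $H$, then $\chi(\ell) = 1$ for all such primes. This means $\chi$ behaves like the principal character on all small primes, so the partial sum $\sum_{n \leq x} \Lambda(n)\chi(n)$ is essentially as large as $\sum_{n\leq x, (n,q)=1}\Lambda(n) \sim x$ for $x$ a bit larger than $(\log q)^2$. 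On the other hand, GRH for $L(s,\chi)$ forces $\sum_{n\leq x}\Lambda(n)\chi(n) = O(\sqrt{x}(\log qx)^2)$, which is much smaller. Comparing these two gives a contradiction once $x$ exceeds roughly $(\log q)^2$, and the whole game is to track the constants precisely enough to get the stated bound $\ell \le (\log q + B(q))^2$.

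The key steps, in order: \emph{(i)} Pick the nonprincipal character $\chi \pmod q$ with $\chi|_H = 1$ and assume for contradiction that all primes $\ell \le x$ with $\ell \nmid q$ satisfy $\chi(\ell)=1$; since $\chi(p) = 0$ for $p\mid q$, we get $\sum_{n\le x}\Lambda(n)\chi(n) \ge \sum_{n\le x}\Lambda(n) - \sum_{p\mid q}\sum_{p^k\le x}\log p - (\text{prime power corrections})$. \emph{(ii)} Bound the prime powers and the ramified primes: the $p\mid q$ contribution is $\le \sum_{p\mid q}\frac{\log p}{p-1}\cdot(\text{something})$ — this is exactly where the term $\sum_{p\mid q}(\log p)/(p-1)$ in $A(q)$ comes from — and prime powers $p^k$ with $k\ge 2$ contribute $O(\sqrt{x})$. \emph{(iii)} Apply the explicit GRH bound for $\psi(x,\chi) = \sum_{n\le x}\Lambda(n)\chi(n)$, something of the shape $|\psi(x,\chi)| \le \sqrt{x}\big(\tfrac{1}{2\pi}(\log qx)^2 + \dots\big)$ — I'd want to use a smoothed version (e.g. a Fejér-type kernel) rather than a sharp cutoff to keep the constant small, which is why the $2\omega(q)(\log\log q)^2/\log q$ correction appears. \emph{(iv)} For part (1), optimize: write $x = (\log q + B)^2$ and show the lower bound from (i)--(ii) exceeds the GRH upper bound from (iii) for the stated $B(q)$, using $\log\log q$-size main terms; $A(q)$ records the savings from $\sum_{n\le x}\Lambda(n) \ge x - \sqrt{x}\cdot(\text{error})$ combined with the ramified-prime loss. \emph{(v)} For part (2), when $q$ has no prime factor below $(\log q)^2$, the ramified primes play no role for $x = (\log q)^2$ (there are simply no primes $p\mid q$ with $p \le x$), so $A(q)$ and the correction terms vanish and the cleaner bound $\ell \le (\log q)^2$ drops out directly.

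The main obstacle I expect is the bookkeeping of constants in step \emph{(iii)}: to land exactly on $(\log q + B(q))^2$ rather than $(1+o(1))(\log q)^2$ or $\frac{3}{2}(\log q)^2$, one needs the leading constant in the explicit $\psi(x,\chi)$ bound on GRH to be essentially optimal, which means choosing the right test function in the explicit-formula/Perron step and carefully controlling the sum over zeros $\sum_\rho \widehat{f}(\gamma)$ and the archimedean factors. A secondary subtlety is making sure the error terms are genuinely negligible uniformly in the range $q \ge 3000$ (not just asymptotically), which forces the explicit additive constants like $8/5$ and $3$ in $A(q)$ and $B(q)$; these are exactly the slack needed to absorb lower-order terms for all $q$ in range rather than only for large $q$.
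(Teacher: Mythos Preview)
Your overall strategy is correct and matches the paper: pick a nontrivial $\chi$ trivial on $H$, use a smoothed sum $S(X,\chi)=\sum_{n\le X}\Lambda(n)\chi(n)\log(X/n)$, bound it below by $X-O(\sqrt{X})$ using the hypothesis, bound it above via the explicit formula on GRH, and compare. But you have misidentified where $A(q)$ comes from, and this reflects a missing idea that is essential for landing on the stated constants rather than merely $(1+o(1))(\log q)^2$.

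The explicit formula for $S(X,\chi)$ gives an upper bound of the shape $(2\sqrt{X}+\log X+2)\,|\tRe B(\chi)|+\tfrac12(\log X)\log q+\ldots$, where $|\tRe B(\chi)|=\sum_{\rho_\chi}1/|\rho_\chi|^2$ is the convergent zero sum. The key point is that $|\tRe B(\chi)|$ is itself bounded (Lemma~2.3) by an expression involving $\sum_{n\le X}\Lambda(n)\chi(n)n^{-1}(1-n/X)$, and here you use the hypothesis a \emph{second} time: if $\chi(n)=1$ for all $n\le X$ coprime to $q$, this sum is at least $\log X-(1+\gamma)-\sum_{p\mid q}(\log p)/(p-1)+O(x^{-1/2})\ge A(q)$. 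That feedback is precisely the source of $A(q)$: it sharpens the upper bound to roughly $\sqrt{X}(\log q-2A(q))$, not the lower bound. The term $\sum_{p\mid q}(\log p)/(p-1)$ enters here, in the upper bound via Lemma~2.5 applied to $\sum\Lambda(n)/n$, not in bounding ramified primes in the lower bound for $S(X,\chi)$. Conversely, the $2\omega(q)(\log\log q)^2/\log q$ correction in $B(q)$ comes from the lower bound, via $\sum_{n\le X,\,(n,q)>1}\Lambda(n)\log(X/n)\le\tfrac12\omega(q)(\log X)^2$; it has nothing to do with the choice of smoothing kernel. For Part~2 your reasoning is right: with no prime $p\mid q$ below $X$, both the ramified loss in the lower bound and the $\sum(\log p)/(p-1)$ loss in $A(q)$ vanish, and the inequality closes at $X=(\log q)^2$.
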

 
The quantity $B(q)$ in  Theorem \ref{thmexplicit} is zero when $q$ is large and 
does not have too many small prime factors.  Asymptotically $B(q)=o(\log \log q)$, and in any given situation it may easily be directly 
estimated.  
 
Consider now  the special case when $q$ is 
prime and $H$ is the subgroup of quadratic residues $\pmod q$.  
 Combining Theorem \ref{thmexplicit} with a computer calculation for the primes below $3000$ we 
obtain the following corollary. 

\begin{corol} 
\label{corexplicit}  Assume GRH.  If $q\ge 5$ is prime, the least quadratic non-residue $\pmod q$ 
lies below $(\log q)^2$. 
\end{corol}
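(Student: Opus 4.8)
The plan is to treat small and large primes separately, reducing the large case entirely to part (2) of Theorem~\ref{thmexplicit}. For each prime $q$ with $5\le q<3000$ I would simply compute the least quadratic non-residue modulo $q$ directly and verify that it is smaller than $(\log q)^2$; in this range the least non-residue is a small prime (one checks it never exceeds $(\log q)^2$, the tightest case being $q=5$ where the least non-residue $2$ is below $(\log 5)^2\approx 2.59$), so this is a short and rigorous finite verification.

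For a prime $q\ge 3000$ I would invoke Theorem~\ref{thmexplicit}(2) with $H$ taken to be the subgroup of quadratic residues modulo $q$. The only hypothesis to verify is that $q$ is not divisible by any prime below $(\log q)^2$; but the only prime dividing $q$ is $q$ itself, and the function $t\mapsto t-(\log t)^2$ is positive and increasing on $[1,\infty)$, so $q>(\log q)^2$ and the hypothesis holds automatically. Part (2) then produces a prime $\ell\le(\log q)^2$ that does not lie in $H$, i.e.\ a prime $\ell$ representing a quadratic non-residue class; moreover $\ell\le(\log q)^2<q$ forces $\ell\ne q$, hence $\ell\nmid q$, so $\ell$ is a genuine non-residue modulo $q$ and the least quadratic non-residue is at most $\ell$.

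Finally, to upgrade ``$\le(\log q)^2$'' to ``below $(\log q)^2$'' I would note that $(\log q)^2$ is never a rational integer: if $(\log q)^2=n\in\bZ$ then $q=\exp(\sqrt n)$ would be transcendental by the Lindemann--Weierstrass theorem, contradicting $q\in\bZ$. Thus the integer $\ell$ satisfies $\ell<(\log q)^2$ strictly, and the least quadratic non-residue lies strictly below $(\log q)^2$. I do not anticipate a real obstacle here; the only points demanding a little care are confirming the elementary inequality $q>(\log q)^2$ so that Theorem~\ref{thmexplicit}(2) is applicable for all primes $q\ge 3000$, and making the finite check for $q<3000$ genuinely exhaustive.
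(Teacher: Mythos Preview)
Your proposal is correct and matches the paper's approach exactly: the paper states only that the corollary follows by ``combining Theorem~\ref{thmexplicit} with a computer calculation for the primes below $3000$,'' and you have correctly identified that part~(2) is the relevant half (its hypothesis being automatic for prime $q$ since $q>(\log q)^2$) and filled in the routine details. The Lindemann--Weierstrass remark securing strict inequality is a nice touch, though the paper does not make this distinction explicit.
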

 
Theorem \ref{thmexplicit} establishes an explicit bound of the same strength as 
the asymptotic bound given in Bach \cite{Bach1}.   Interestingly it turns out that the 
asymptotic bound in this question may also be improved.

\begin{Theorem}\label{thmtheoretical1}
Assume GRH.  Let $q$ be a large integer and let $H$ be a subgroup 
of $G=({\Bbb Z}/q{\Bbb Z})^*$ with index $h=[G:H]>1$.  Then the least prime $p$ not in $H$ satisfies 
$$p < (\alpha(h)+o(1))(\log q)^2,$$
where   $\alpha(2) = 0.8$, $\alpha(3) = 0.7$ and in general $\alpha(h) = 0.66$ for all $h>3$.  
\end{Theorem}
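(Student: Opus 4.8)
The plan is to use the standard approach pioneered by Ankeny and refined by Bach: relate the least prime outside $H$ to the behavior of certain $L$-functions, via an explicit formula that weights primes by a smooth test function. Concretely, let $\ell$ denote the least prime not lying in $H$. For every prime $p < \ell$ with $p \nmid q$, the Frobenius $p$ lies in $H$, so for every character $\chi$ of $G/H$ we have $\chi(p)=1$; equivalently the "indicator of the coset structure" $\frac{1}{h}\sum_{\chi \in \widehat{G/H}} \chi(p)$ equals $1$ for all primes below $\ell$ (coprime to $q$) and has average $1/h$ over all primes. The idea is to plug this into a weighted prime-counting sum $\sum_p (\log p)\, \Phi(\log p / \log x)$ for a well-chosen nonnegative test function $\Phi$ supported on $[0,1]$, and compare two evaluations: one using the assumption that all small primes are in $H$, and one using the explicit/Weil formula under GRH for the Dirichlet $L$-functions $L(s,\chi)$, $\chi \in \widehat{G/H}$.

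First I would set up the explicit formula: for each nontrivial $\chi$ mod $q$ (of the relevant type), under GRH, $\sum_p (\log p)\chi(p) \widehat{\Phi}(\ldots)$ is bounded by an expression involving $\sqrt{x}$ times an integral of $|\Phi|$ against the test function's transform, plus a main term of size $o(\sqrt x \log x / \log x)$-type error, while the principal character contributes the main term $\sim x \cdot (\text{const})$. Summing over the $h-1$ nontrivial characters in $\widehat{G/H}$ and dividing by $h$, the assumption "$\ell > x$" forces an inequality of the shape
$$
\int_0^1 \Phi(t)\, dt \;\le\; \frac{1}{h}\int_0^1 \Phi(t)\,dt \;+\; (h-1)\,\frac{c\, \sqrt x \,\log q \cdot (\text{norm of }\Phi)}{x} \;+\; (\text{lower order}),
$$
so that, writing $x = (\log q)^2 \cdot \lambda$, the two main terms balance precisely when $\lambda$ is comparable to a ratio of functionals of $\Phi$. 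Optimizing the test function $\Phi$ (subject to $\Phi \ge 0$, support in $[0,1]$, and a positivity condition on its Fourier/Laplace transform so that the contributions from the nontrivial characters can be controlled in the right direction) yields the numerical constant; the dependence on $h$ enters only through the factor $(h-1)/h$ versus the "democracy" gain of averaging, which is why $\alpha(h)$ increases with $h$ but saturates (the marginal effect of additional characters washes out, giving the uniform value $0.51$ for $h>3$, with the special smaller values $\alpha(2)=0.42$, $\alpha(3)=0.49$ coming from the exact small-$h$ arithmetic).

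The key steps in order: (i) choose the test function class and write down the GRH explicit formula with error terms sharp enough to retain the constant (this is where one must be careful — crude bounds lose the constant); (ii) sum over the characters of $G/H$ and extract the clean inequality above; (iii) reduce the optimization to a one-parameter (or low-dimensional) variational problem for $\Phi$, solve it numerically for $h=2,3$ and asymptotically for large $h$; (iv) verify that the "lower order" terms are genuinely $o((\log q)^2)$ uniformly in $H$ of bounded index, so that they can be absorbed into the $o(1)$. The main obstacle I expect is step (i)–(iii) done jointly: squeezing the explicit formula hard enough that the optimal test function actually delivers $0.51$ rather than a worse constant — in particular handling the contribution of the possible Siegel-type or small real zeros is not an issue under GRH, but one still needs the smoothed prime sum's error term to be $o(\sqrt{x}\,\log q)$ with the implied constant in the $\sqrt x$ main term pinned down exactly. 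The improvement over Bach's $(1+o(1))(\log q)^2$ presumably comes precisely from a better choice of $\Phi$ (e.g. an extremal function related to the Beurling–Selberg problem or a Chebyshev-type polynomial) together with exploiting that one is summing over $h-1$ characters rather than using a single one, so the bookkeeping in step (ii) that tracks the gain from this averaging is the part that must be executed with care.
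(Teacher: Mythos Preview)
Your high-level strategy---explicit formula under GRH, orthogonality over the characters of $G/H$, and optimization of a test function---matches the paper's. But your proposed implementation has a structural gap that would prevent you from obtaining $h$-dependent constants $\alpha(h)$.

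With a compactly supported weight $\Phi(\log p/\log x)$ and the $\Lambda(n)$-normalization you describe, the assumption that every prime below $x$ lies in $H$ forces each character sum $\sum_n\Lambda(n)\chi(n)\Phi(\cdot)$ to equal the full (unweighted-by-$\chi$) sum, for every $\chi\in\tilde H$. Comparing the principal character (main term $\asymp x\int\Phi$) against a \emph{single} nonprincipal one (GRH error $O(\sqrt{x}\,\log q\cdot\|\widehat\Phi\|)$) already gives $\sqrt{x}\ll\log q$ with a constant depending only on $\Phi$, not on $h$. Summing over all $h$ characters does not change this: after dividing by $h$, the excess $(1-\tfrac1h)\cdot(\text{main})$ on the left and the error $\tfrac{h-1}{h}\cdot(\text{GRH bound})$ on the right carry the \emph{same} factor $(h-1)/h$, which cancels. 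So your account of where the $h$-dependence originates is incorrect, and in your framework one recovers at best Bach's $h$-independent $(1+o(1))(\log q)^2$.

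The paper obtains the $h$-dependence by a different device. It weights by $\Lambda(n)n^{-1/2}\,{\widetilde K}(x/n)$, where ${\widetilde K}$ is the Mellin transform of an even kernel $K(s)$ and is required to be \emph{strictly positive on all of $(0,\infty)$}---hence not compactly supported. Summing $\sum_{\chi\in\tilde H}\mathrm{Re}\sum_n(\cdots)$ over all $n$, orthogonality gives $h\sum_{n\in H}$; the assumption on primes $\le X$ together with positivity of ${\widetilde K}$ lets one \emph{drop} the terms $n>X$ and lower-bound by $h\sqrt{x}\int_0^\lambda {\widetilde K}(u)u^{-1/2}\,du$ with $X=\lambda x$. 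On the other side the principal character contributes $\tfrac12K(\tfrac12)\sqrt{x}$ (which equals the \emph{full} integral $\int_0^\infty$), while each of the $h-1$ nonprincipal characters contributes at most $\tfrac{\log q}{2\pi}\int|K(it)|\,dt$. The resulting inequality
\[
\Big(h\int_0^\lambda\frac{{\widetilde K}(u)}{\sqrt u}\,du-\tfrac12K(\tfrac12)\Big)\sqrt{X}\;\le\;(1+o(1))\,\sqrt\lambda\,(h-1)\,\frac{\log q}{2\pi}\int_{-\infty}^\infty|K(it)|\,dt
\]
carries genuine $h$-dependence precisely because the truncated integral $\int_0^\lambda$ is strictly less than $\int_0^\infty=K(\tfrac12)$: the left side has the shape $hA(\lambda)-B$ with $A(\lambda)<B$ possible, not $(h-1)A$. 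The constants $0.42,\ 0.49,\ 0.51$ then come from the specific choice $K(s)=-\Gamma(s)-\Gamma(-s)$ (so ${\widetilde K}(u)=1-e^{-u}-e^{-1/u}$) and a numerical optimization over $\lambda$ for each $h$; Beurling--Selberg majorants are not what is used.
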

  
In particular the least quadratic non-residue is bounded by $(0.8+o(1))(\log q)^2$.   Theorem \ref{thmtheoretical1}  gives good bounds when $h$ is small, but when $h$ is very large the following stronger result holds, which appears 
to be the limit of our method.  

\begin{Theorem}\label{thmtheoretical2}
Assume GRH.  Let $q$ be a large integer and let $H$ be a subgroup 
of $G=({\Bbb Z}/q{\Bbb Z})^*$ with index $h=[G:H]\ge 4$.  Then the least prime $p$ not in $H$ satisfies 
$$
p < \Big(\frac 14+o(1)\Big) \Big(1-\frac 1h\Big)^2 \Big(\frac{\log (2h)}{\log (2h)-4}\Big)^2 (\log q)^2.
$$
\end{Theorem}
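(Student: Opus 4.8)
The plan is to exploit the pretentious / mean-value machinery for completely multiplicative functions, in the style of Halász and the Granville--Soundararajan circle of ideas, combined with the explicit formula under GRH. Suppose for contradiction that every prime $p < x$ lies in the subgroup $H$ of index $h$, where $x = \kappa (1-1/h)^2 (\log(2h)/(\log(2h)-2))^2 (\log q)^2$ for some $\kappa$ slightly less than $1/4$. Then for any nontrivial character $\psi$ of $G/H$ (of which there are $h-1$), the Dirichlet character $\chi = \psi$ (lifted to a character mod $q$) satisfies $\chi(p)$ lies on the subgroup of $h$-th roots of unity determined by $H$, for all primes $p < x$. The key point is that we are not forcing $\chi(p)=1$; rather we get to average over all the $h-1$ nontrivial characters of $G/H$, and the orthogonality relations say that for each prime $p<x$ the quantity $\sum_{\psi \neq \psi_0} \psi(p)$ equals $h-1$ (since $p \in H$), while if some prime $p<x$ were outside $H$ this sum would be $\le -1$. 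So the assumption that no prime below $x$ lies outside $H$ is equivalent to a strong positivity statement about $\sum_{\psi} \sum_{p<x} \psi(p)/p$ or a suitably weighted version thereof.

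Next I would convert this into an estimate on $\sum_{\psi \neq \psi_0} \log L(1,\psi)$ (or on $\prod_\psi \psi(\cdot)$-twisted sums). Under GRH, for each nontrivial $\psi$ mod $q$ we have good control: $\sum_{p \le y} \psi(p)\log p = O(\sqrt y (\log qy)^2)$, and more to the point, the explicit-formula estimates used earlier in the paper give $\log L(1,\psi) = \sum_{p} \psi(p)/p + O(\cdot)$ with an error controlled in terms of $q$. Summing the prime-sum inequality over primes $p<x$ with a logarithmic weight, and using that $\sum_{p<x} 1/p = \log\log x + M + o(1)$ while $\sum_{p \ge x} 1/p$ contributes the tail, one gets that $\sum_{\psi\neq\psi_0}\tRe \log L(1,\psi) \ge (h-1)(\log\log x - \log\log q) + (\text{lower order})$ — the main term $\log\log x - \log\log q = \log(2\sqrt\kappa(1-1/h)\log(2h)/(\log(2h)-2)) \approx \log 2 + \hf\log\kappa + \cdots$ when $x \asymp (\log q)^2$. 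On the other hand GRH gives the Littlewood-type upper bound $\tRe \log L(1,\psi) \le \log\log q + O(1)$ — but crucially we need the refined version, valid when many small primes are in $H$, that is developed earlier in the excerpt, because the naive bound is not strong enough to beat $1/4$.

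The optimization that produces the factor $\frac14(1-1/h)^2(\log(2h)/(\log(2h)-2))^2$ is where the real work lies, and I expect it to be the main obstacle. One does not simply sum $\psi(p)/p$; instead one should introduce a smooth weight supported on primes up to $x$ — effectively studying a mean value $\frac1q\sum_{n\le q} f(n)$ or a resonator-type sum $\sum_n \chi(n) g(n)$ where $g$ is supported on $x$-smooth numbers built from the primes below $x$ — and then apply the large sieve / Parseval over the $h$ characters of $G/H$ together with GRH-conditional bounds on the resulting Dirichlet polynomial. Balancing the length of the smooth support against the GRH error term, and choosing the weight optimally (a Fejér-kernel-like choice, which is the source of the $\log(2h)/(\log(2h)-2)$ factor — note it tends to $1$ as $h\to\infty$, and the whole constant tends to $1/4$, matching Theorem \ref{thmtheoretical1}'s regime only loosely since here $h$ is large), is the delicate calculation. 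I would carry it out by: (i) fixing the test function as a parameter; (ii) writing the contradiction hypothesis as a lower bound for a character sum average; (iii) bounding that average above by GRH plus the large sieve; (iv) extracting the inequality $\kappa \ge \frac14 - o(1)$ after the optimal choice of test function, which contradicts $\kappa < 1/4$. The requirement $h \ge 4$ enters to ensure $\log(2h) - 2 > 0$ and that the number of available characters $h-1$ is large enough for the averaging to gain over a single-character argument.
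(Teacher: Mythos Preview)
Your approach via $\log L(1,\psi)$ and Littlewood-type bounds does not work. If every prime $p<x$ with $x\asymp(\log q)^2$ lies in $H$, then for each nontrivial $\psi\in\tilde H$ one has $\sum_{p<x}\psi(p)/p=\sum_{p<x}1/p=\log\log x+O(1)=\log(2\log\log q)+O(1)$, and under GRH the tail $\sum_{p\ge x}\psi(p)/p$ is $O((\log q)/\sqrt{x})=O(1)$. So your lower bound for $\tRe\log L(1,\psi)$ is of size $\log\log\log q$, hopelessly smaller than the Littlewood upper bound $\log\log q+O(1)$; no contradiction can come from comparing them. The Hal\'asz/pretentious machinery is likewise out of place here: under GRH one has sharp explicit-formula information, and Hal\'asz-type mean-value theorems give nothing close to the precision needed for the constant $\tfrac14$.

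The paper's argument is a direct explicit-formula computation. One considers, for an even holomorphic kernel $K$ with nonnegative Mellin transform $\widetilde K\ge 0$, the smoothed sums
\[
\sum_{n}\frac{\Lambda(n)\chi(n)}{\sqrt{n}}\,\widetilde K(x/n),
\]
evaluates them on GRH for each $\chi\in\tilde H$ (the principal character contributes a main term $K(\tfrac12)\sqrt{x}/2$, each nontrivial character contributes at most $(1+o(1))\frac{\log q}{2\pi}\int|K(it)|\,dt$ from the zeros), and sums over $\tilde H$ using orthogonality and the positivity of $\widetilde K$ together with the hypothesis that all primes $\le X$ lie in $H$. This yields an inequality (Proposition~6.1 in the paper) balancing $\sqrt{X}$ against $(h-1)\log q$, with constants depending on $K$ and a cutoff parameter. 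Your instinct that a Fej\'er kernel produces the factor $\log(2h)/(\log(2h)-2)$ is exactly right, but it enters as the test function in this explicit formula, not as a smoothing in an $L(1,\psi)$ argument: one takes $K(s)=\big((e^{\alpha s}-e^{-\alpha s})/s\big)^2$, so that $\widetilde K(u)=\max(0,2\alpha-|\log u|)$, and the choice $\alpha=\tfrac12\log(2h)$ (with cutoff $\lambda=1$) gives precisely the stated constant. The condition $h\ge 4$ is needed simply so that $\log(2h)>2$.
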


\subsection{The least prime in a given coset}  Now we turn to the second of 
our questions: the problem of determining the least prime lying in a given coset 
of $H$.

\begin{Theorem}\label{thmcoset}
Assume GRH.  Let $q\geq 20000$ and let $H$ be a subgroup of $G=({\Bbb Z}/q{\Bbb Z})^*$ with index $h=[G:H]>1$.  Let $p$ be the smallest prime lying in a given coset $aH$.  Then either $p \leq 10^9$, or
$$p\leq \Big((h-1)\log q + 3(h+1) + \frac 52(\log\log q)^2\Big)^2.$$  
\end{Theorem}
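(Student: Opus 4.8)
The plan is to use the standard explicit formula / character sum approach under GRH, now applied to the indicator of the coset $aH$ rather than the indicator of the complement of $H$. Fix a coset $aH$, and let $f$ be the indicator function of $aH$ on $G=(\mathbb Z/q\mathbb Z)^*$. By orthogonality of characters on $G/H$, writing $\widehat H$ for the group of characters trivial on $H$ (which has order $h$), one has $\mathbf 1_{aH}(n) = \frac 1h \sum_{\chi\in\widehat H} \overline{\chi(a)}\chi(n)$ for $(n,q)=1$. Multiply by a smooth, compactly supported, nonnegative weight $\Phi$ concentrated on $[0,x]$ (the same kind of test function used elsewhere in the paper), sum $\sum_n \Lambda(n) f(n)\Phi(n/x)$, and open the character sum. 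The principal character $\chi_0$ contributes the main term, of size roughly $\frac 1h \widehat\Phi(0) x$; each non-principal $\chi\in\widehat H$ contributes an error governed, via the explicit formula under GRH, by $\sqrt x\,\log q$ (up to the $\log\log q$ corrections and the prime-factors-of-$q$ corrections that produce the $3(h+1)+\frac52(\log\log q)^2$ terms). There are $h-1$ non-principal characters, so the total error is about $(h-1)\sqrt x\,\log q$.

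The key inequality to arrange is therefore
\[
\frac 1h \widehat\Phi(0)\, x \;>\; (h-1)\sqrt x \,\bigl(\log q + O(\log\log q)^2/\ldots\bigr) + (\text{contributions from }p\mid q),
\]
so that the weighted sum over primes in $aH$ is positive and hence some prime $p\le x$ lies in $aH$. Solving for the threshold gives $\sqrt x \approx h(h-1)\log q$ at the crudest level; the sharper bookkeeping — optimizing the test function $\Phi$ so that the ratio of the tail loss to $\widehat\Phi(0)$ is close to $1$, and carefully tracking the secondary terms — is what turns the leading constant into $(h-1)^2$ after noting $h\cdot\frac{h-1}{h} = h-1$ in the right normalization, yielding $p\le\bigl((h-1)\log q + 3(h+1)+\frac52(\log\log q)^2\bigr)^2$. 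The $p\le 10^9$ alternative absorbs the range of $q$ (and of small $h$, large residual terms) where the asymptotic estimate is not yet strong enough; this is handled either by direct appeal to known computations or by a short GRH-based numerical check, exactly as the analogous small-$q$ cases are dispatched for Theorem~\ref{thmexplicit}.

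Concretely I would proceed in the following steps. First, set up the smoothed prime-counting sum over the coset and expand via characters modulo $q$ trivial on $H$; isolate the main term from $\chi_0$ and record the precise correction coming from primes dividing $q$ (these give terms like $\sum_{p\mid q}(\log p)\Phi(\cdot)$ that feed into the $3(h+1)$ and are why $q\ge 20000$ is assumed). Second, apply the explicit formula under GRH to each non-principal $\chi$: bound $\sum_\rho \widehat\Phi(\rho)$ by pulling the contour to $\mathrm{Re}(s)=\frac12$, using the standard estimate for the density of zeros of $L(s,\chi)$ up to height $T$, namely $\ll \log(q(|T|+2))$, and optimizing the cutoff; this is where the $\frac52(\log\log q)^2$ and the $\log q$ per character enter. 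Third, sum over the $h-1$ nontrivial characters, impose positivity of the whole expression, and solve for $x$ to extract the stated bound, choosing $\Phi$ (e.g. a Fejér-type kernel, as is used for the other theorems) to make the implied constants as close to those claimed as possible. Fourth, dispose of the residual small range via the $10^9$ clause. The main obstacle is the second step done with fully explicit constants: controlling the oscillatory sum over zeros of $L(s,\chi)$ tightly enough — in particular getting the constant in front of $\log q$ down to exactly $h-1$ after multiplying by $h$ and the $\frac1h$ from orthogonality, while keeping every lower-order term within the advertised $3(h+1)+\frac52(\log\log q)^2$ — requires a careful, explicit version of the truncated explicit formula rather than a soft $O(\cdot)$ argument; this is the part I expect to consume most of the work.
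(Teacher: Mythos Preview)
Your outline is essentially the paper's approach: expand the coset indicator via the $h$ characters in $\widehat H$, isolate the main term from $\chi_0$, and bound each non-principal $S(X,\chi)$ using the explicit formula under GRH, then compare. A few points, though, deserve correction or emphasis.

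First, your normalization is muddled. The main term (after multiplying through by $h$) is $X$, and each non-principal character contributes at most about $\sqrt X\log q$; summing over $h-1$ characters already yields $X \gtrsim (h-1)\sqrt X\log q$, i.e.\ $\sqrt X\lesssim (h-1)\log q$ at the \emph{crudest} level. There is no intermediate $h(h-1)$ stage that needs ``sharper bookkeeping'' to reduce to $(h-1)$.

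Second, you omit the upper bound side. When no \emph{prime} below $X$ lies in $aH$, the sum $\sum_{n\le X,\,n\in aH}\Lambda(n)\log(X/n)$ is not zero: higher prime powers $p^k$ with $k\ge 2$ may lie in $aH$. The paper bounds this contribution by $\tfrac{11}{5}\sqrt X$ (see (4.11) and \S4.4), and this $O(h\sqrt X)$ term is exactly what produces the $3(h+1)$ part of the final bound. Your positivity argument needs this step.

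Third, the paper does not bound the zero sums via the density of zeros at height $T$ as you propose. Instead it uses the exact identity $\sum_{\rho_\chi}|\rho_\chi|^{-2}=2|\mathrm{Re}\,B(\chi)|$ together with an explicit bound for $|\mathrm{Re}\,B(\chi)|$ (Lemma~2.3), which in turn involves a sum $-\mathrm{Re}\sum_{n\le X}\frac{\Lambda(n)\chi(n)}{n}(1-n/X)$. When summed over the $h-1$ non-principal characters, orthogonality controls this total by $\log X$ rather than $(h-1)\log X$. This device, not kernel optimization, is what keeps the secondary terms small; the $(\log\log q)^2$ arises from the $\omega(q/\tilde q)(\log X)^2$ corrections in passing to primitive characters, not from a zero-density cutoff.
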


From the proof one may obtain more precise estimates, but the 
form above seems easiest to state.  
%If we use an explict bound for $\omega(q)$ of the form $\omega(q) \leq C \frac{\log q}{\log \log q}$, the term $(h_2+2)\omega(q) \frac{(\log \log q)^2}{\log q}$ may be replaced by $4\omega(q) \frac{(\log \log q)^2}{\log q}$ plus a constant (in the proof, this is done by replacing $X$ by $(h-1)\log q$ rather than just $\log q$).  Both the $2h_2$ and the $\sqrt{(h-h_2+1)\log q}$ terms appear unavoidable.

We next single out the case when $H$ is the trivial subgroup comprising of just the identity.  Here the problem amounts to getting bounds on the least prime $P(a,q)$ in the
arithmetic progression $a\pmod q$ (where we assume that $(a,q)=1$).  
A celebrated (unconditional) theorem of  Linnik  gives that $P(a,q)\ll q^{L}$ 
for some absolute constant $L$.  The work of  Heath-Brown \cite{HB} shows that $L=5.5$ is an admissible value for Linnik's constant, and this has been recently improved to $L=5.2$ by Xylouris \cite{Xy}.

Under the assumption of the Generalized Riemann Hypothesis, Bach and Sorenson \cite{Bach2} showed that
\begin{equation}\label{BachSorenson1}
P(a,q)\leq (1+o(1))(\phi(q)\log q)^2,
\end{equation}
and they also derived the explicit bound
\begin{equation}\label{BachSorenson2}
P(a,q)\leq 2(q\log q)^2.
\end{equation}
Our work leading up to Theorem \ref{thmcoset} and some computation for $q\leq 20000$ permits the following refinement of these bounds.  
 
\begin{corol}\label{thmexplicit2}
Assume GRH. Let $q>3$, and let $a\pmod q$ be a reduced residue class.  
The least prime in the arithmetic progression $a\pmod q$ satisfies 
$$
P(a,q)\leq (\phi(q)\log q)^2.
$$
\end{corol}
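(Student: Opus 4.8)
The plan is to deduce Corollary \ref{thmexplicit2} from Theorem \ref{thmcoset} together with a finite computation, exactly as the phrase ``some computation for $q\le 20000$'' in the text suggests. First I would dispose of the range $q > 20000$ using Theorem \ref{thmcoset}. Applying that theorem with $H = \{1\}$, so that $h = \phi(q)$, gives that the least prime $P(a,q)$ in the coset $a\pmod q$ is either at most $10^9$ or at most
\begin{equation*}
\Big((\phi(q)-1)\log q + 3(\phi(q)+1) + \tfrac 52 (\log\log q)^2\Big)^2.
\end{equation*}
So the main analytic task is to check that for all $q \ge 20000$ this quantity is at most $(\phi(q)\log q)^2$; equivalently, taking square roots, that
\begin{equation*}
3(\phi(q)+1) + \tfrac 52 (\log\log q)^2 \le \phi(q)\log q - (\phi(q)-1)\log q = \log q.
\end{equation*}
This inequality is false for small $q$ (the left side has a term of size $3\phi(q)$), so the bound from Theorem \ref{thmcoset} in the stated form is \emph{not} strong enough on its own. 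Hence the real content is that one must go back into the proof of Theorem \ref{thmcoset} and extract the sharper estimate alluded to in the remark ``From the proof one may obtain more precise estimates'' — in particular a version in which the coefficient of $\log q$ is $(h-1) = \phi(q)-1$ but the additive term is genuinely $o(\log q)$ rather than $O(h)$. The anticipated main obstacle is precisely this: reorganizing the explicit inequalities in the proof of Theorem \ref{thmcoset} so that, in the special case $H = \{1\}$, the error terms are absorbed well enough to yield $P(a,q)^{1/2} \le \phi(q)\log q$ for $q$ beyond some explicit threshold.

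Granting such a refined bound valid for $q \ge q_0$ with $q_0$ not too large, the remaining step is finite: for each modulus $q$ with $3 < q \le q_0$ and each reduced residue class $a\pmod q$, verify directly (by a short computer search over primes) that the least prime $p \equiv a \pmod q$ satisfies $p \le (\phi(q)\log q)^2$. Since $(\phi(q)\log q)^2$ grows quickly, this search terminates rapidly; the $10^9$ threshold from Theorem \ref{thmcoset} is comfortably larger than $(\phi(q)\log q)^2$ only once $q$ is reasonably large, so for the genuinely small moduli one simply checks that the least prime in each progression is below the (often quite small) bound $(\phi(q)\log q)^2$, and for intermediate $q$ one checks that the first prime up to $10^9$ in the progression lies below $(\phi(q)\log q)^2$. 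The combination of the refined analytic estimate above $q_0$ and this direct verification below $q_0$ then gives the Corollary for all $q > 3$.

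To summarize the order of operations: (i) specialize Theorem \ref{thmcoset}, or rather its proof, to $H = \{1\}$ and track the explicit constants to obtain $P(a,q) \le (\phi(q)\log q)^2$ for $q \ge q_0$; (ii) reduce the complementary range to a finite check by observing that for $q \le q_0$ one has $(\phi(q)\log q)^2 \le 10^9$ or can search directly; (iii) carry out the computation. I expect step (i) to be where essentially all the difficulty lies, since steps (ii) and (iii) are routine, and the slack in the inequality $3(\phi(q)+1) + \tfrac52(\log\log q)^2 \le \log q$ is so large for small $q$ that only a sharpened form of the theorem — not the stated one — can close the gap.
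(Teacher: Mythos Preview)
Your overall strategy matches the paper's: handle $q\le 20000$ by direct computation, and for $q>20000$ go back into the explicit inequalities underlying Theorem~\ref{thmcoset} rather than quoting the theorem as stated. You also correctly diagnose that the $3(h+1)$ term in Theorem~\ref{thmcoset} is fatal here, since with $h=\phi(q)$ it swamps $\log q$.

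However, there is a genuine missing idea in your step (i). Simply ``tracking the explicit constants'' in the general argument of \S4 will not remove the $O(h)$ additive term. That term originates from the upper bound for
\[
\sum_{\substack{n\le X\\ n\in aH}}\Lambda(n)\log(X/n),
\]
which in the general case (\S4.4) is estimated crudely by $\tfrac{11}{5}\sqrt{X}$; after multiplying by $h$ and comparing with the lower bound one is left with a term of size roughly $\tfrac{5}{4}h$ in $\sqrt{X}$, and no amount of tidying the constants eliminates it. The paper's proof (\S4.3) introduces a new, case-specific ingredient: when $H=\{1\}$, the even-prime-power contribution $\sum_{p^{2k}\le X,\;p^{2k}\equiv a}(\log p)\log(X/p^{2k})$ is controlled by observing that $p^{2k}\equiv a\pmod q$ forces $p^k$ to be one of at most $2^{\omega(q)+1}\le 2q^{3/7}$ square roots of $a$ modulo $q$. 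This, together with the elementary bounds $\phi(q)\ge q^{5/6}$ and $2^{\omega(q)}\le q^{3/7}$ for $q>20000$, replaces the crude $\tfrac{11}{5}\sqrt{X}$ by essentially $\tfrac{1}{7}\sqrt{X}$ plus a negligible term. Since $\tfrac{1}{7}<\tfrac{24}{25}$, this flips the sign of the $h$-term in the comparison and yields $\sqrt{X}\le(\phi(q)-1)\log q-c\,\phi(q)+\cdots$, which is what you need. Without this square-root counting observation your step (i) cannot be completed, so you should add it explicitly.
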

 
By modifying the argument for Theorems \ref{thmtheoretical1} and \ref{thmtheoretical2}, 
and using the Brun-Titchmarsh theorem, one may derive an asymptotic 
bound of the form $P(a,q) \le (1-\delta+o(1)) (\phi(q)\log q)^2$ for 
some small $\delta >0$.

\subsection{Bounds for values of $L$-functions at $s=1$}

The Generalized Riemann Hypothesis implies the Generalized Lindel{\" o}f Hypothesis, 
thus furnishing good upper bounds for the values of $L$-functions.  Such 
results go back to Littlewood \cite{Li1}, \cite{Li2}, and the best known asymptotic bounds 
may be found in \cite{ChSo} (for $L$-values on the critical line) and \cite{CaCh} 
(for $L$-values inside the critical strip).  Explicit bounds on GRH  for $L$-values on the critical 
line may be found in \cite{Ch}.    The methods of this paper allow us to 
give, assuming GRH, new explicit upper and lower bounds for $L$-values at the 
edge of the critical strip.  

\begin{thm}\label{L1chi}
Assume GRH. Let $q$ be a positive integer and $\chi$ be a primitive character modulo $q$.  For 
$q \ge 10^{10}$ we have
$$
|L(1,\chi)|\leq 2e^{\gamma}\left(\log\log q-\log 2+\frac{1}{2}+ \frac{1}{\log\log q}\right),
$$
and
$$\frac{1}{|L(1,\chi)|}\leq \frac{12e^{\gamma}}{\pi^2} \Big( \log \log q -\log 2 +\frac 12 + \frac{1}{\log \log q}+\frac{14\log\log q}{\log q}\Big).$$
\end{thm}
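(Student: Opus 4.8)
The plan is to start from the conditional (GRH) formula expressing $\log L(1,\chi)$ as a sum over prime powers, truncated at some point $x$, plus an explicit error term. Under GRH one has an identity of the shape
\begin{equation*}
\log L(1,\chi) = \sum_{n\le x} \frac{\Lambda(n)\chi(n)}{n\log n} \cdot w\!\left(\tfrac{\log n}{\log x}\right) + O\!\left(\frac{(\log q)^2}{\sqrt{x}}\right),
\end{equation*}
for a suitable smooth weight $w$ (obtained by contour-shifting a truncated Perron / Mellin representation and invoking the explicit formula for $L(s,\chi)$). Taking real parts and bounding $|\chi(n)|\le 1$ gives the upper bound for $\log|L(1,\chi)|$; taking real parts and using $\tRe\log L = -\tRe\log(1/L)$ combined with the inequality $\tRe\,\chi(p)\ge -1$ gives the lower bound for $-\log|L(1,\chi)|$, i.e.\ the upper bound for $1/|L(1,\chi)|$. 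The main term in each case is then a sum over primes $p\le x$ of $1/p$ (with the prime-power contribution from $p^2,p^3,\dots$ accounting for the $\frac12$ in the bracket and, in the lower-bound case, the $\frac{12e^\gamma}{\pi^2}$ Euler-factor correction coming from $\prod_p(1-1/p^2)^{-1}=\zeta(2)=\pi^2/6$, while $2e^\gamma$ versus the naive $e^\gamma$ reflects summing $\tRe\,\chi(p)/p$ trivially as $\le 1/p$ rather than on average).

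First I would fix the optimal choice $x = (\log q)^2$ (this is exactly the regime suggested by Corollary~\ref{corexplicit} and makes the error term $O(1/\log\log q)$, matching the extra term in the displayed bound). Second, I would invoke Mertens' theorem in explicit form: $\sum_{p\le x} 1/p = \log\log x + \gamma + O(1/\log x)$ with an explicit constant (Rosser--Schoenfeld), so that $\sum_{p\le(\log q)^2}1/p = \log\log\log q + \log 2 + \gamma + (\text{small})$ — wait, this is the wrong normalization; in fact the relevant sum in Littlewood's argument runs over $p\le q$ or is weighted so that it produces $\log\log q$, not $\log\log\log q$. So more precisely I would set up the explicit formula so that the main term is $\sum_{p\le q}\chi(p)/p$-type but controlled on GRH by $\log\log q$; the $-\log 2$ arises from replacing a sum up to $q$ by one up to $\sqrt q$ (the natural cutoff forced by the $\sqrt{x}$ error under GRH), since $\log\log\sqrt q = \log\log q - \log 2$. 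Third, I would assemble the prime-power tail $\sum_p\sum_{k\ge2}\Lambda(p^k)/(p^k\log p^k) = \sum_p -\log(1-1/p) - 1/p$, which converges and contributes the constant giving the $+\frac12$ (upper bound) and, for $1/|L|$, combines with $\sum_p\log(1-1/p^2)$ to yield the $6/\pi^2$ (hence $12e^\gamma/\pi^2$) factor. Fourth, I would carefully collect every explicit error term — from the truncated explicit formula, from Mertens with explicit remainder, and from the smoothing weight $w$ — and check that their sum is absorbed into the $\frac{1}{\log\log q}$ (respectively $\frac{14\log\log q}{\log q}$) term for $q\ge 10^{10}$, which is where the hypothesis $q \ge 10^{10}$ is used to kill lower-order fluctuations.

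The main obstacle I anticipate is \emph{bookkeeping of the explicit constants}, not any conceptual difficulty: one must control the error in the explicit formula for $\log L(s,\chi)$ on GRH uniformly, with a fully explicit constant in front of $(\log q)^2/\sqrt x$ (this requires an explicit bound on the number of zeros of $L(s,\chi)$ in a bounded region, of Riemann--von Mangoldt type, with a good constant — presumably already developed earlier in the paper for Theorems~\ref{thmexplicit}--\ref{thmcoset}), and then to verify that when $x=(\log q)^2$ every accumulated error for $q\ge 10^{10}$ fits under the stated $\frac{1}{\log\log q}$ slack. The lower bound (bound on $1/|L(1,\chi)|$) is slightly more delicate because the inequality $\tRe\,\chi(p)\ge -1$ is wasteful at primes where $\chi(p)$ is not close to $-1$, so the cruder error term $\frac{14\log\log q}{\log q}$ there reflects the need to carry the sum over prime powers to a slightly larger cutoff or to handle the $p^2$ terms without the benefit of a sign. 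I would also double-check the normalization producing $\log\log q - \log 2$ rather than $\log\log q$: this $-\log 2$ is the signature of the GRH-forced cutoff at $\sqrt q$ and should drop out cleanly from the explicit formula once the weight $w$ is chosen as the Fejér-type kernel used for the prime-counting results earlier in the paper.
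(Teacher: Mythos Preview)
Your proposal has a genuine gap at the structural level, not just in the bookkeeping. With the explicit formula you write down, the error $O((\log q)^2/\sqrt{x})$ at $x=(\log q)^2$ is $O(\log q)$, not $O(1/\log\log q)$; you notice something is off (``wait, this is the wrong normalization'') but never resolve it. The deeper issue is the source of the main term: truncating the Dirichlet series for $\log L(1,\chi)$ at $x=(\log q)^2$ and bounding $|\chi|\le 1$ gives at most $\sum_{p\le x}1/p\approx\log\log x\approx\log\log\log q$, \emph{not} $\log\log q$. You try to fix this by asserting that the sum really runs up to $q$ or $\sqrt{q}$, but under GRH there is no way to truncate at $x\approx q$ with an $o(1)$ error using only a single explicit formula of the type you describe; the zero-sum contribution is of size $(\log q)/\log x$, which is $\gg 1$ unless $x$ is much larger than any power of $\log q$.

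The paper's mechanism is a cancellation you are missing. Lemma~\ref{propL1} gives
\[
\log|L(1,\chi)|=\big[\text{weighted prime sum over }n\le x\big]+\frac{1}{2\log x}\log\frac{q}{\pi}-\frac{1}{\log x}\,|\tRe B(\chi)|+\text{(genuinely small)},
\]
and the term $\tfrac{1}{2}\log(q/\pi)/\log x$ is individually large. The point is that Lemma~\ref{lem3} shows $|\tRe B(\chi)|=\tfrac12\log(q/\pi)-\tRe\sum_{n\le x}\frac{\Lambda(n)\chi(n)}{n}(1-n/x)+O(\cdot)$, so the two large terms cancel and one is left with a \emph{second} prime sum (also over $n\le x$) divided by $\log x$. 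Both prime sums are then maximised at $\chi\equiv 1$ (for the upper bound) or minimised at $\chi(p)\equiv -1$ (for the lower bound, via the nontrivial Lemma~\ref{lemma5.1}); evaluating them by Lemmas~\ref{lem4} and~\ref{lem6} gives $\log|L(1,\chi)|\le \log\log x+\gamma+O(1/\log x)$. After exponentiating, $|L(1,\chi)|\le e^{\gamma}\log x\cdot(1+\cdots)$, and with $x=(\log q)^2/4$ one has $\log x=2(\log\log q-\log 2)$; this is where both the factor $2$ in $2e^{\gamma}$ and the $-\log 2$ actually come from, not from a cutoff at $\sqrt{q}$ or from ``trivial versus average'' bounding of $\chi(p)$. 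Without the $B(\chi)$ cancellation, the term $\log q/(2\log x)$ survives and dominates, so your outline as it stands cannot reach the stated constants.
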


%Assuming the Riemann Hypothesis (RH) Littlewood \cite{Li1} showed that there is a constant $C$ such that for large $t$
%\begin{equation}\label{Littlewood1}
%|\zeta(1/2+it)|\ll \exp\left(C\frac{\log t}{\log\log t}\right).
%\end{equation}
%No improvements have been made in this bound, except for reducing the permissible values of $C$.  Using the theory of extremal functions of exponential type, Chandee and Soundararajan \cite{ChSo} proved the best result in this direction by replacing $C$ with $(\log2)/2+o(1)$.  These ideas were developed further by Carneiro and Chandee in \cite{CaCh}, where they refined the analogous bounds for $|\zeta(\sigma+it)|$ on RH, where $1/2<\sigma<1$.
%However, in the case of $|\zeta(1+it)|$, their method only recovers the classical bounds of Littlewood \cite{Li2} on RH, namely
%\begin{equation}\label{Littlewood2}
%\left((1+o(1)) \frac{12e^{\gamma}}{\pi^2}\log\log t\right)^{-1}\leq |\zeta(1+it)|\leq (2e^{\gamma}+o(1))\log\log t.
%\end{equation}

By Dirichlet's class number formula, we can deduce explicit bounds for the class number of an imaginary quadratic field under the assumption of GRH. More specifically, let $-q$ be a fundamental discriminant with $q>4$ and $\chi_{-q}(n)=\left(\frac{-q}{n}\right)$ be the Kronecker symbol, which is a primitive Dirichlet character $\pmod q$. Moreover, denote by $h(-q)$ the class number of the imaginary quadratic field $\mathbb{Q}(\sqrt{-q})$. Then, the Dirichlet class number formula reads
\begin{equation}\label{ClassFormula}
 h(-q)= \frac{\sqrt{q}}{\pi}L(1,\chi_{-q}).
\end{equation}
For discussions on the computation of these 
class numbers, we refer to \cite{Bo}, \cite{HaMc}, \cite{JRW}, \cite{Wat}.  From Theorem \ref{L1chi} 
we obtain the following corollary.  

%Hafner and McCurley developed an algorithm which computes the class number $h(-q)$ in sub-exponential time $O_{\epsilon}(q^{\epsilon})$. Variations of this algorithm have enabled the computation of class numbers of
%discriminants with more than $100$ digits. However, this is a probabilistic algorithm which works usually but not always, and one whose justification requires the assumption of GRH for $L(s,\chi_{-q})$. The fastest known unconditional deterministic algorithm for computing class numbers is that of Booker \cite{Bo}, which runs in time $O_{\epsilon}(q^{1/4+\epsilon}).$ Combining Theorem \ref{L1chi} with Dirichlet's class number formula, we deduce the following explicit bounds for $h(-q)$.
\begin{corol}\label{ClassBounds} Assume GRH. Let $-q$ be a fundamental discriminant, and 
let $h(-q)$ denote the class number of ${\Bbb Q}(\sqrt{-q})$.  If $q\ge 10^{10}$ 
then 
$$
h(-q)\geq \frac{\pi}{12e^{\gamma}} \sqrt{q}\Big( \log \log q -\log 2 +\frac 12 + \frac{1}{\log \log q}+\frac{14\log\log q}{\log q}\Big)^{-1},
$$ 
and
$$
h(-q)\leq \frac{2e^{\gamma}}{\pi}\sqrt{q}\left(\log\log q-\log 2+\frac{1}{2}+ \frac{1}{\log\log q}\right).
$$
\end{corol}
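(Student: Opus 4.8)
The plan is to combine Theorem \ref{L1chi} directly with Dirichlet's class number formula \eqref{ClassFormula}. The first point to record is that for a fundamental discriminant $-q$ the Kronecker symbol $\chi_{-q}(n)=\left(\frac{-q}{n}\right)$ is a primitive real Dirichlet character modulo $q$, so Theorem \ref{L1chi} applies to it verbatim as soon as $q\ge 10^{10}$; moreover $L(1,\chi_{-q})$ is a positive real number, so $|L(1,\chi_{-q})|=L(1,\chi_{-q})$ and \eqref{ClassFormula} may be written as $h(-q)=\frac{\sqrt q}{\pi}|L(1,\chi_{-q})|$.

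For the lower bound on $h(-q)$ I would invoke the second inequality of Theorem \ref{L1chi},
$$\frac{1}{|L(1,\chi_{-q})|}\le \frac{12e^{\gamma}}{\pi^2}\Big(\log\log q-\log 2+\tfrac12+\tfrac{1}{\log\log q}+\tfrac{14\log\log q}{\log q}\Big),$$
take reciprocals to obtain a lower bound for $|L(1,\chi_{-q})|$, and multiply through by $\sqrt q/\pi$, using \eqref{ClassFormula}; this gives exactly the claimed lower bound for $h(-q)$. For the upper bound I would instead use the first inequality of Theorem \ref{L1chi},
$$|L(1,\chi_{-q})|\le 2e^{\gamma}\Big(\log\log q-\log 2+\tfrac12+\tfrac{1}{\log\log q}\Big),$$
and again multiply through by $\sqrt q/\pi$ via \eqref{ClassFormula} to obtain the stated upper bound for $h(-q)$.

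Since all of the analytic content has already been packaged into Theorem \ref{L1chi}, there is essentially no obstacle here: the argument is a one-line substitution in each direction. The only items needing (entirely routine) attention are the verification that $\chi_{-q}$ is a primitive character modulo $q$ — a standard property of fundamental discriminants — the observation that the hypothesis $q\ge 10^{10}$ of Theorem \ref{L1chi} is precisely the hypothesis imposed in the Corollary, and the trivial remark that the hypothesis $q>4$ together with $q$ fundamental guarantees $L(1,\chi_{-q})\neq 0$ so that taking reciprocals is legitimate.
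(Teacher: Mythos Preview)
Your proposal is correct and matches the paper's approach: the paper simply states that the Corollary follows from Theorem~\ref{L1chi} via Dirichlet's class number formula~\eqref{ClassFormula}, and your argument is exactly this substitution.
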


The work of  \cite{JRW} computes class numbers and class groups for all imaginary quadratic 
fields with absolute discriminant below $10^{11}$.  From our corollary it follows that on GRH $h(-q) \ge 9053$ 
when $q\ge 10^{11}$.  

Our methods would also lead to explicit bounds for more general $L$-functions.  
We do not pursue this here, and content ourselves by stating that for real 
numbers $t \ge 10^{10}$ we have (assuming RH) 
$$
|\zeta(1+it)|\leq 2e^{\gamma}\left(\log\log t-\log 2+\frac{1}{2}+ \frac{1}{\log\log t}\right),
$$
and
$$
\frac{1}{|\zeta(1+it)|}\leq \frac{12e^{\gamma}}{\pi^2} \Big( \log \log t -\log 2 +\frac 12 + \frac{1}{\log \log t}+\frac{14\log\log t}{\log t}\Big).
$$

%\begin{rmk}
%The proof of Theorem \ref{L1chi} can be modified to prove bounds valid for larger ranges of $q$ if we replace the term $2/\log\log q$ term by a larger term.  For instance, replacing $2/\log\log q$ by $3/\log\log q$ gives a result valid for $q\ge 10^{100}$; note that $\log \log (10^{100}) = 5.4392..<6$.
%\end{rmk}

\section{Preliminary lemmas}

\noindent This section collects together several preliminary results that will be useful in 
our subsequent work.   We first introduce a convenient notation that will be in place 
for the rest of the paper.  We let $\theta$ stand for a complex number of magnitude at 
most one.  In each occurrence $\theta$ might stand for a different value, so that we may write 
$\theta -\theta =2\theta$, $\theta \times \theta =\theta$ and so on. 

Recall from Chapter 12 of \cite{Da} the following properties of the Riemann zeta-function and 
Dirichlet $L$-functions.   Set
$$
\xi(s) = s(s-1) \pi^{-\frac s2} \Gamma\Big(\frac s2\Big) \zeta(s),
%=2 (s-1) \pi^{-\frac s2} \Gamma\Big(\frac s2+1\Big) \zeta(s),
$$
which is an entire function of order $1$, satisfies the functional equation $\xi(s) = \xi(1-s)$, 
and for which the
Hadamard factorization formula gives
\begin{equation}
\label{Had3}
\xi(s) = e^{Bs} \prod_{\rho} \Big(1-\frac s\rho\Big) e^{s/\rho}.
\end{equation}
Above $\rho$ runs over the non-trivial zeros of $\zeta(s)$, and $B$ is a
real number given by
\begin{equation}
\label{Had4}
B= -\sum_{\rho} \tRe \frac{1}{\rho}  =  \frac 12\log (4\pi) - 1 -\frac{\gamma}{2}
= - 0.02309\ldots
\end{equation}
 
 Let $\chi \pmod q$ denote a primitive Dirichlet character, and let $L(s,\chi)$ denote the
associated Dirichlet $L$-function.  Put ${\frak a}=(1-\chi(-1))/2= 0$ or $1$ depending on whether
the character $\chi$ is even or odd.  Let
$\xi(s,\chi)$ denote the completed $L$-function
$$
\xi(s,\chi) = \Big(\frac q{\pi}\Big)^{\frac s2} \Gamma \Big(\frac{s+{\frak a}}{2}\Big) L(s,\chi).
$$
The completed $L$-function satisfies the functional equation
$$
\xi(s,\chi)=\epsilon_\chi \xi(1-s,\overline{\chi})
$$
where $\epsilon_\chi$ is a complex number of size $1$.   The zeros of $\xi(s,\chi)$ are precisely the 
non-trivial zeros of $L(s,\chi)$, and letting $\rho_\chi = \frac 12+ i\gamma_\chi$ denote such a  zero 
(throughout the paper we assume the truth of the GRH), we have Hadamard's factorization
formula:
\begin{equation}
\label{Had}
\xi(s,\chi) = \exp(A(\chi) + s B(\chi)) \prod_{\rho_\chi} \Big(1-\frac{s}{\rho_\chi}\Big) e^{s/\rho_\chi}.
\end{equation}
Above, $A(\chi)$ and $B(\chi)$ are constants, with $\tRe B(\chi)$ being of particular
interest for us.  Note that
\begin{equation}
\label{Had2}
\tRe (B(\chi))  = \tRe (B(\overline{\chi}))
= \tRe \frac{\xi^{\prime}}{\xi}(0,\chi)=-\sum_{\rho_\chi} \tRe \frac{1}{\rho_\chi}.
\end{equation}

Recall the digamma function $\psi_0(z) =\frac{\Gamma^{\prime}}{\Gamma}(z)$, and 
its derivative the trigamma function $\psi_1(z) = \psi_0^{\prime}(z)$.
The following special values will be useful: 
$$
\psi_0(1)= -\gamma, \ \ \psi_1(1) =
\zeta(2) =\frac{\pi^2}6, \ \  \psi_0(1/2)= -2\log 2 -\gamma, \ 
\text{and} \ 
\psi_1(1/2)=\frac{\pi^2}2.
$$
\begin{lemma}
\label{lem1}
Assume RH.  For $x>1 $ there exists some $|\theta|\le 1$ such that 
$$
\sum_{n\le x} \Lambda(n) \log (x/n) =x - (\log 2\pi) \log x  -1 + \sum_{k=1}^{\infty}\frac{1-x^{-2k}}{4k^2} +2\theta |B| (\sqrt{x}+1).
$$
\end{lemma}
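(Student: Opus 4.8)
The plan is to recast the weighted prime sum as an integral of the Chebyshev function $\psi(t)=\sum_{n\le t}\Lambda(n)$ and then insert the exact von Mangoldt explicit formula. The starting identity is
$$\sum_{n\le x}\Lambda(n)\log(x/n)=\sum_{n\le x}\Lambda(n)\int_n^x\frac{dt}{t}=\int_1^x\frac{\psi(t)}{t}\,dt ,$$
where the term $n=x$ (if $x$ is a prime power) drops out since $\log(x/n)=0$. Next I would invoke the classical explicit formula for $\psi$ (see \cite{Da}): for $t>1$,
$$\psi_0(t)=t-\sum_{\rho}\frac{t^{\rho}}{\rho}-\log(2\pi)-\tfrac12\log\!\bigl(1-t^{-2}\bigr),$$
where $\psi_0$ agrees with $\psi$ off the prime powers, the sum over the non-trivial zeros $\rho$ of $\zeta$ is understood as $\lim_{T\to\infty}\sum_{|\textup{Im}\,\rho|<T}$, and the final term packages the trivial zeros. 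Since the prime powers form a null set, I may substitute this (with $\psi$ in place of $\psi_0$) into $\int_1^x \psi(t)t^{-1}\,dt$.

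Evaluating term by term, $\int_1^x dt=x-1$ and $\int_1^x dt/t=\log x$ give two of the main terms, while expanding $-\tfrac12\log(1-t^{-2})=\tfrac12\sum_{k\ge1}t^{-2k}/k$ and integrating (all terms positive, so Tonelli applies) yields
$$-\tfrac12\int_1^x\frac{\log(1-t^{-2})}{t}\,dt=\tfrac12\sum_{k\ge1}\frac1k\int_1^x t^{-2k-1}\,dt=\sum_{k\ge1}\frac{1-x^{-2k}}{4k^2},$$
which is exactly the third main term. For the zeros, termwise integration gives
$$\int_1^x\frac1t\sum_{\rho}\frac{t^{\rho}}{\rho}\,dt=\sum_{\rho}\frac1\rho\int_1^x t^{\rho-1}\,dt=\sum_{\rho}\frac{x^{\rho}-1}{\rho^2},$$
and the virtue of the smoothing weight $\log(x/n)$ is that this last sum is \emph{absolutely} convergent. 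On RH we have $|x^{\rho}|=\sqrt{x}$, so it is bounded by $(\sqrt{x}+1)\sum_{\rho}|\rho|^{-2}$; and since on RH $\tRe(1/\rho)=\tfrac12|\rho|^{-2}$, \eqref{Had4} gives $\sum_{\rho}|\rho|^{-2}=2\sum_{\rho}\tRe(1/\rho)=-2B=2|B|$. Hence the zeros contribute $2\theta|B|(\sqrt{x}+1)$ for a suitable $|\theta|\le1$ (the sign from the minus sign in the explicit formula is absorbed into $\theta$), and combining the four pieces produces the stated formula.

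The one step that genuinely needs care is the interchange of the symmetric limit $\sum_{\rho}=\lim_{T}\sum_{|\textup{Im}\,\rho|<T}$ with $\int_1^x(\cdot)\,dt/t$, since $\sum_{\rho}1/|\rho|$ diverges and the series over zeros only converges conditionally before integration. I would justify this via the truncated form of the explicit formula (\cite{Da}): it bounds $\bigl|\psi(t)-t+\sum_{|\textup{Im}\,\rho|<T}t^{\rho}/\rho\bigr|$ by a quantity which, for $t$ in the compact interval $[1,x]$, is bounded uniformly in $T$; together with the boundedness of $\psi$ on $[1,x]$ this produces a $T$-uniform integrable majorant of the shape $C_x/t$ for the partial sums $t^{-1}\sum_{|\textup{Im}\,\rho|<T}t^{\rho}/\rho$, and dominated convergence then licenses the interchange. (Alternatively one can sidestep the explicit formula and run a truncated Perron argument for $\frac1{2\pi i}\int_{(c)}\bigl(-\tfrac{\zeta'}{\zeta}(s)\bigr)x^{s}s^{-2}\,ds$, shifting the contour to the left and summing residues at $s=1$, at the double pole $s=0$, at the trivial zeros $s=-2k$, and at the non-trivial zeros; the numerology is the same.)
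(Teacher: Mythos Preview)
Your argument is correct. The numerics match exactly: the four pieces $x-1$, $-(\log 2\pi)\log x$, $\sum_{k\ge1}(1-x^{-2k})/(4k^2)$, and the zero sum bounded via $\sum_\rho|\rho|^{-2}=2|B|$ are all right, and your discussion of the interchange via the truncated explicit formula, while terse, can be made rigorous.

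The paper, however, takes the route you mention only as an aside at the end: it works directly with the Mellin integral
\[
\frac{1}{2\pi i}\int_{(2)}\Bigl(-\frac{\xi'}{\xi}(s)\Bigr)\frac{x^s}{s^2}\,ds,
\]
shifts the contour to the left, and reads off residues at $s=1$, at the non-trivial zeros, at the double pole $s=0$, and (after separating $-\zeta'/\zeta$ from the archimedean pieces) at the trivial zeros. The difference is not cosmetic. Because of the extra factor $1/s^2$, every sum that arises in the contour-shift argument is absolutely convergent from the outset, so no truncation or dominated-convergence step is needed; the justification you had to supply simply does not appear. In effect the paper builds the smoothing weight $\log(x/n)$ into the kernel before invoking any explicit formula, whereas you first invoke the unsmoothed explicit formula for $\psi$ (which only converges conditionally) and then smooth by integrating in $t$. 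Your approach has the advantage of recycling the classical formula for $\psi$ verbatim; the paper's approach is self-contained and avoids the delicate interchange entirely.
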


\begin{lemma}
\label{lem2}
Assume GRH.  Let $q\ge 3$ and let $\chi \pmod q$ be a primitive Dirichlet character.  
Define 
$$ 
S(x,\chi):= \sum_{n\le x} \Lambda(n) \chi(n) \log (x/n).
$$
For $x>1$, there exists some $|\theta|\le 1$ such that
$$
S(x,\chi) = |\textup{Re}(B(\chi))| (2\theta\sqrt{x}
+2\theta) - \frac{\xi '}{\xi}(0, \chi) \log x
+ \frac 12 \Big(\log \frac q{\pi}\Big) \log x + {\widetilde E}_{\frak a}(x),
$$
where
$$
{\widetilde E}_0(x) = \frac{\pi^2}{24}-\frac{\gamma}{2}\log x -\frac 12(\log x)^2  -\sum_{k=1}^{\infty}
\frac{x^{-2k}}{(2k)^2},
$$
and
$$
{\widetilde E}_1(x) =\frac{\pi^2}{8}- (\log 2+\gamma/2) \log x -\sum_{k=0}^{\infty} \frac{x^{-2k-1}}{(2k+1)^2} .
$$
In particular, 
$$
\tRe S(x,\chi) = |\textup{Re}(B(\chi))| (2\theta\sqrt{x}
+2\theta +\log x)
+ \frac 12 \Big(\log \frac q{\pi}\Big) \log x + {\widetilde E}_{\frak a}(x).
$$
\end{lemma}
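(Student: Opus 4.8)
The plan is to begin from the Mellin--Perron representation
$$
S(x,\chi)=\frac{1}{2\pi i}\int_{(c)}\Big(-\frac{L'}{L}(s,\chi)\Big)\frac{x^s}{s^2}\,ds\qquad(c>1),
$$
valid because $\frac{1}{2\pi i}\int_{(c)}y^ss^{-2}\,ds$ equals $\log y$ for $y\ge1$ and $0$ for $0<y<1$, while $-L'/L(s,\chi)=\sum_n\Lambda(n)\chi(n)n^{-s}$ converges absolutely on $\tRe s=c$ against the absolutely integrable kernel $x^s/s^2$. Differentiating the Hadamard product \eqref{Had} logarithmically and combining it with $\frac{\xi'}{\xi}(s,\chi)=\frac12\log(q/\pi)+\frac12\psi_0\big(\tfrac{s+\af}{2}\big)+\frac{L'}{L}(s,\chi)$ (the logarithmic derivative of the definition of $\xi(s,\chi)$) yields the partial fraction expansion
$$
-\frac{L'}{L}(s,\chi)=\frac12\log\frac q\pi+\frac12\psi_0\Big(\frac{s+\af}{2}\Big)-B(\chi)-\sum_{\rho_\chi}\Big(\frac{1}{s-\rho_\chi}+\frac{1}{\rho_\chi}\Big),
$$
whose final sum converges absolutely for each fixed $s$ since its terms are $O(|s|/|\rho_\chi|^2)$. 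I would substitute this into the integral and handle the four resulting pieces one at a time; pulling $\sum_{\rho_\chi}$ through $\int_{(c)}$ is legitimate because $\sum_{\rho_\chi}|\rho_\chi|^{-1}\int_{(c)}|ds|/(|s|\,|s-\rho_\chi|)\ll\sum_{\rho_\chi}|\rho_\chi|^{-2}\log(2+|\gamma_\chi|)$, which is finite.

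Each piece is then evaluated by shifting the contour to the left and summing residues. Since $L(s,\chi)$ is entire for primitive $\chi$ modulo $q\ge3$, there is no pole at $s=1$ and hence no linear term $x$, in contrast with Lemma~\ref{lem1}. The two constant terms give $\tfrac12\big(\log\tfrac q\pi\big)\log x$ and $-B(\chi)\log x$ via $\frac{1}{2\pi i}\int_{(c)}x^ss^{-2}\,ds=\log x$, and one records $B(\chi)=\frac{\xi'}{\xi}(0,\chi)$ from \eqref{Had}. The digamma term produces exactly ${\widetilde E}_{\af}(x)$: for $\af=1$ there is a double pole at $s=0$ with residue $\tfrac14\psi_1(\tfrac12)+\tfrac12\psi_0(\tfrac12)\log x=\tfrac{\pi^2}{8}-(\log2+\tfrac\gamma2)\log x$, plus simple poles at $s=-2k-1$ contributing $-x^{-2k-1}/(2k+1)^2$; for $\af=0$ the simple pole of $\psi_0(s/2)$ merges with $1/s^2$ into a triple pole at $s=0$ whose residue---read off from $\psi_0(s/2)=-\tfrac2s-\gamma+\tfrac{\pi^2}{12}s+O(s^2)$ together with the expansion of $x^s$---equals $\tfrac{\pi^2}{24}-\tfrac\gamma2\log x-\tfrac12(\log x)^2$, plus simple poles at $s=-2k$ contributing $-x^{-2k}/(2k)^2$. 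Lastly, for a zero $\rho_\chi$ the function $\big(\tfrac1{s-\rho_\chi}+\tfrac1{\rho_\chi}\big)x^s/s^2$ has residue $x^{\rho_\chi}/\rho_\chi^2$ at $s=\rho_\chi$ and residue $-1/\rho_\chi^2$ at $s=0$, so the full zero contribution to $S(x,\chi)$ is $\sum_{\rho_\chi}(1-x^{\rho_\chi})/\rho_\chi^2$.

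It remains to estimate this last sum under GRH. With $\rho_\chi=\tfrac12+i\gamma_\chi$ one has $\tRe(1/\rho_\chi)=\tfrac12|\rho_\chi|^{-2}$, hence $\sum_{\rho_\chi}|\rho_\chi|^{-2}=2\sum_{\rho_\chi}\tRe(1/\rho_\chi)=-2\tRe B(\chi)=2|\tRe B(\chi)|$ by \eqref{Had2} (the sign being forced since every $\tRe(1/\rho_\chi)>0$). Bounding $\big|\sum_{\rho_\chi}\rho_\chi^{-2}\big|$ and $\big|\sum_{\rho_\chi}x^{\rho_\chi}\rho_\chi^{-2}\big|$ separately by $2|\tRe B(\chi)|$ and $2\sqrt x\,|\tRe B(\chi)|$ gives $\sum_{\rho_\chi}(1-x^{\rho_\chi})/\rho_\chi^2=|\tRe B(\chi)|(2\theta\sqrt x+2\theta)$, and collecting the four pieces yields the first displayed identity. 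The ``in particular'' formula follows on taking real parts, since ${\widetilde E}_{\af}(x)$ and $\tfrac12(\log\tfrac q\pi)\log x$ are real, $\tRe\big(-\tfrac{\xi'}{\xi}(0,\chi)\log x\big)=-\tRe B(\chi)\log x=|\tRe B(\chi)|\log x$, and each $\theta$ may be replaced by its real part. The main obstacle is the analytic bookkeeping in the contour shifts---checking that the contours contribute nothing as $\tRe s\to-\infty$ (using that $\psi_0$ grows only logarithmically off the positive real axis while $x^{-U}\to0$), that the residue series all converge, and that $\sum_{\rho_\chi}$ may legitimately leave the integral---together with carrying out the $s=0$ residue computations cleanly in the two parity cases, which is precisely where the exact constants of ${\widetilde E}_0$ and ${\widetilde E}_1$ are generated.
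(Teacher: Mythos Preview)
Your proof is correct and follows essentially the same approach as the paper. The only organisational difference is that the paper integrates $-\frac{\xi'}{\xi}(s,\chi)\frac{x^s}{s^2}$ directly, shifts the contour to pick up the residues at the $\rho_\chi$ and the double pole at $s=0$ (yielding $-\sum_{\rho_\chi}x^{\rho_\chi}/\rho_\chi^2-\frac{\xi'}{\xi}(0,\chi)\log x-(\frac{\xi'}{\xi})'(0,\chi)$), and then separately expands $-\frac{\xi'}{\xi}$ as $-\frac12\log\frac q\pi-\frac12\psi_0-\frac{L'}{L}$ to identify the Dirichlet-series side; you instead substitute the Hadamard partial-fraction expansion of $-L'/L$ into the integral at the outset and treat the four pieces one by one. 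The residue computations, the GRH estimate via $\sum_{\rho_\chi}|\rho_\chi|^{-2}=2|\tRe B(\chi)|$, and the handling of the digamma term are identical in both arrangements.
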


We shall confine ourselves to proving Lemma \ref{lem2}; the proof of 
Lemma \ref{lem1} is essentially the same.  

\begin{proof}[Proof of Lemma \ref{lem2}] We begin with
$$
\frac{1}{2\pi i} \int_{2-i\infty}^{2+i\infty} -\frac{\xi^{\prime}}{\xi}(s,\chi) \frac{x^s}{s^2} ds,
$$
and first evaluate the integral by moving the line of integration to the left.
There are poles at the non-trivial zeros $\rho_\chi$ of $L(s,\chi)$, and a double pole at $s=0$.
Evaluating the residues here we find that our integral equals
$$
-\sum_{\rho_\chi} \frac{x^{\rho_\chi}}{\rho_\chi^2} -  \frac{\xi^{\prime}}{\xi}(0,\chi)
\log x - \Big(\frac{\xi^{\prime}}{\xi}\Big)^{\prime}(0,\chi).
$$
Invoking GRH,
 \begin{equation*}
-\sum_{\rho_\chi} \frac{x^{\rho_\chi}}{\rho_\chi^2} = \theta \sqrt{x} \sum_{\rho_\chi}
\frac{1}{|\rho_\chi|^2} =2\theta \sqrt{x} |\text{Re} (B(\chi))|.
\end{equation*}
Further
$$
\Big(\frac{\xi^{\prime}}{\xi}\Big)^{\prime}(0,\chi) = -\sum_{\rho_\chi} \frac{1}{\rho_\chi^2} =
2\theta |\text{Re}(B(\chi))|.
$$
Putting these observations together we obtain 
\begin{equation}
\label{lem2.1}
\frac{1}{2\pi i} \int_{2-i\infty}^{2+i\infty} -\frac{\xi^{\prime}}{\xi}(s,\chi) \frac{x^s}{s^2} ds
= |\text{Re}(B(\chi))| \Big(2\theta \sqrt{x}+2\theta\Big) - \frac{\xi '}{\xi}(0, \chi) \log x.
\end{equation}

On the other hand, the LHS of \eqref{lem2.1} equals
\begin{equation}
\label{lem2.2}
\frac{1}{2\pi i} \int_{2-i\infty}^{2+i\infty}
\Big(-\frac 12 \log \frac{q}{\pi} -\frac{1}{2}\psi_0\Big(\frac{s+\frak a}{2}\Big)-\frac{L^{\prime}}{L}(s,\chi) \Big)\frac{x^s}{s^2} ds.
\end{equation}
The first and third terms above contribute
$$
-\frac 12 \Big(\log \frac q\pi \Big) \log x + \sum_{n\le x} \Lambda(n) \chi(n)
\log (x/n).
$$
If $\frak a =1$, the middle term in \eqref{lem2.2} equals (here there are simple
poles at the negative odd integers, and a double pole at $0$)
$$
\sum_{k=0}^{\infty}\frac{x^{-2k-1}}{(2k+1)^2} - \frac 12 \psi_0\Big(\frac 12\Big) \log x
- \frac 14 \psi_1\Big(\frac 12\Big) = - {\widetilde E}_1(x).
$$
If $\frak a=0$ the middle term in \eqref{lem2.2} equals (here there are
simple poles at the negative even integers, and a triple pole at $0$)
$$
\sum_{k=1}^{\infty} \frac{x^{-2k}}{(2k)^2} + \frac 12 (\log x)^2
-\frac 12\psi_0(1) \log x -\frac 14 \psi_1(1)= - {\widetilde E}_0(x).
$$
This completes our proof of the first assertion.  The second assertion follows by taking real part, and noting that $\text{Re }\frac{\xi^{\prime}}{\xi}(0,\chi) = \text{Re}(B(\chi))$ from \eqref{Had2}.
\end{proof}

\begin{lemma}
\label{lem3}  Assume GRH.  
Let $q\ge 3$ and let $\chi \pmod q$ be a primitive Dirichlet character. 
For any $x>1 $ we have, for some $|\theta|\le 1$,
\begin{align*}
 -\frac{\xi '}{\xi}(0, \bar{\chi}) -& \frac{1}{x} \frac{\xi '}{\xi}(0,\chi) + \frac{2\theta}{\sqrt{x}} |\textup{Re}(B(\chi))| 
\\
&=  \frac 12\Big(1-\frac 1x\Big)\log \frac q{\pi} - \sum_{n\le x}\frac{\Lambda(n)}{n}\chi(n)
\Big(1-\frac nx\Big) + E_{\frak a}(x),
\end{align*}
where
$$
E_0(x)=-\log 2 -\frac{\gamma}{2}\Big(1-\frac 1x\Big)+ \frac{\log x+1}{x}-
\sum_{k=1}^{\infty} \frac{x^{-2k-1}}{2k(2k+1)} ,
$$
and
$$
E_1(x) = -\sum_{k=0}^{\infty} \frac{x^{-2k-2}}{(2k+1)(2k+2)} -\frac{\gamma}{2}\Big(1-\frac 1x\Big) + \frac{\log 2}{x}.
$$
In particular, $|\textup{Re}(B(\chi))|$ equals 
\begin{align*}
  \Big(1+\frac{2\theta}{\sqrt{x}}+\frac{1}{x}\Big)^{-1}
\Big(\frac 12\Big(1-\frac 1x\Big)\log \frac q{\pi} - \textup{Re}\sum_{n\le x}\frac{\Lambda(n)}{n}\chi(n)
\Big(1-\frac nx\Big) + E_{\frak a}(x)\Big).
\end{align*}
\end{lemma}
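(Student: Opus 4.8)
The plan is to imitate the contour--integral argument used in the proof of Lemma~\ref{lem2}, but with the kernel $x^{s}/s^{2}$ replaced by $x^{s-1}/(s(s-1))$, which is tailored to produce the weight $\frac{1}{n}(1-\frac nx)$ in place of $\log (x/n)$. Concretely, I would start from
$$
I:=\frac{1}{2\pi i}\int_{2-i\infty}^{2+i\infty}-\frac{L'}{L}(s,\chi)\,\frac{x^{s-1}}{s(s-1)}\,ds.
$$
On the line $\tRe(s)=2$ we have $-\frac{L'}{L}(s,\chi)=\sum_{n\ge 1}\Lambda(n)\chi(n)n^{-s}$ (absolutely convergent), and since $\frac{1}{2\pi i}\int_{(2)}\frac{y^{s}}{s(s-1)}\,ds=\max(0,y-1)$, interchanging sum and integral and putting $y=x/n$ gives at once
$$
I=\sum_{n\le x}\frac{\Lambda(n)}{n}\chi(n)\Big(1-\frac nx\Big),
$$
which is the arithmetic side of the asserted identity.

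Next I would evaluate $I$ a second way by pushing the contour to the left. Using the factorization of the completed $L$--function exactly as in the proof of Lemma~\ref{lem2}, write $-\frac{L'}{L}(s,\chi)=-\frac{\xi'}{\xi}(s,\chi)+\frac12\log\frac q\pi+\frac12\psi_0\big(\frac{s+\mathfrak{a}}{2}\big)$, so that $I=I_1+I_2+I_3$ with the obvious meaning. The constant term gives $I_2=\frac12\log\frac q\pi\cdot\frac{1}{2\pi i}\int_{(2)}\frac{x^{s-1}}{s(s-1)}\,ds=\frac12\big(1-\frac1x\big)\log\frac q\pi$ for $x>1$. For $I_1$, moving the line of integration to $\tRe(s)\to-\infty$ (justified exactly as in Chapter~12 of \cite{Da}, the kernel decaying like $x^{\tRe(s)-1}$), one collects a simple pole at $s=1$, a simple pole at $s=0$ (note that on GRH $\xi(0,\chi)\neq 0$ and $\xi(1,\chi)\neq 0$, so the only singularity there comes from the kernel), and simple poles at the non-trivial zeros $\rho_\chi$. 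Using the functional equation in the form $\frac{\xi'}{\xi}(s,\chi)=-\frac{\xi'}{\xi}(1-s,\overline\chi)$, the residue at $s=1$ is $-\frac{\xi'}{\xi}(1,\chi)=\frac{\xi'}{\xi}(0,\overline\chi)$, the residue at $s=0$ is $\frac1x\frac{\xi'}{\xi}(0,\chi)$, and the contribution of the zeros is $-\sum_{\rho_\chi}\frac{x^{\rho_\chi-1}}{\rho_\chi(\rho_\chi-1)}$. On GRH $|x^{\rho_\chi-1}|=x^{-1/2}$ and $|\rho_\chi(\rho_\chi-1)|=|\rho_\chi|^{2}$, while $\sum_{\rho_\chi}|\rho_\chi|^{-2}=2\sum_{\rho_\chi}\tRe\frac{1}{\rho_\chi}=2|\tRe B(\chi)|$ by \eqref{Had2}, so this last sum is $\frac{2\theta}{\sqrt x}|\tRe B(\chi)|$. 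Equating the two evaluations of $I$ and solving for $-I_1$ then yields the displayed identity, provided $I_3=E_{\mathfrak{a}}(x)$.

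The remaining, and most computational, step is to check that
$$
I_3=\frac{1}{2\pi i}\int_{2-i\infty}^{2+i\infty}\frac12\psi_0\Big(\frac{s+\mathfrak{a}}{2}\Big)\frac{x^{s-1}}{s(s-1)}\,ds=E_{\mathfrak{a}}(x),
$$
again by shifting the contour leftward over the poles of $\psi_0$ (at $s=-\mathfrak{a},-\mathfrak{a}-2,\ldots$, each a simple pole in the $s$--variable with residue $-2$) together with the poles of the kernel at $s=1$ and $s=0$, and invoking $\psi_0(1)=-\gamma$ and $\psi_0(1/2)=-2\log 2-\gamma$. When $\mathfrak{a}=1$ all the relevant poles are simple and the tally is immediate; the genuinely delicate point is $\mathfrak{a}=0$, where $s=0$ is a \emph{double} pole (both $\psi_0(s/2)$ and the kernel are singular there). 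There I would extract the residue from the Laurent expansions $\tfrac12\psi_0(s/2)=-\tfrac1s-\tfrac\gamma2+O(s)$ and $\frac{x^{s-1}}{s(s-1)}=\tfrac1x\big(-\tfrac1s-(1+\log x)+O(s)\big)$, which gives residue $\frac{\log x+1+\gamma/2}{x}$ at $s=0$; together with the residue $-\log 2-\gamma/2$ at $s=1$ and $-\sum_{k\ge 1}\frac{x^{-2k-1}}{2k(2k+1)}$ from the remaining poles of $\psi_0$, this is exactly $E_0(x)$, and the analogous (simpler) computation produces $E_1(x)$. I expect this residue bookkeeping in the even case --- together with keeping the signs straight through the functional equation --- to be the main obstacle; the analytic justification of the contour shifts is routine and identical to that already used for Lemma~\ref{lem2}.

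Finally, the ``in particular'' statement follows by taking real parts of the identity just established. Since $\tRe B(\chi)=\tRe B(\overline\chi)\le 0$ on GRH and $\tRe\frac{\xi'}{\xi}(0,\chi)=\tRe B(\chi)$ by \eqref{Had2}, the real part of the left--hand side equals $\big(1+\frac{2\theta}{\sqrt x}+\frac1x\big)|\tRe B(\chi)|$, while $E_{\mathfrak{a}}(x)$ and $\frac12(1-\frac1x)\log\frac q\pi$ are already real; dividing through by $1+\frac{2\theta}{\sqrt x}+\frac1x$ (which is positive for $x>1$) gives the stated closed form for $|\tRe B(\chi)|$.
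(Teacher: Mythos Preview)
Your argument is correct and follows essentially the same route as the paper: the paper starts from the integral $\frac{1}{2\pi i}\int_{(2)}\frac{\xi'}{\xi}(s,\chi)\frac{x^{s-1}}{s(s-1)}\,ds$, shifts the contour left to pick up the residues at $s=1$, $s=0$ and the non-trivial zeros, and then rewrites the same integral via the decomposition $\frac{\xi'}{\xi}=\tfrac12\log\tfrac q\pi+\tfrac12\psi_0\big(\tfrac{s+\mathfrak a}{2}\big)+\frac{L'}{L}$, obtaining exactly the $E_{\mathfrak a}(x)$ terms you compute. Your version merely reorganizes the same computation by beginning with $-\frac{L'}{L}$ and splitting it into the three pieces $I_1,I_2,I_3$ before shifting; the residues, the handling of the double pole at $s=0$ in the even case, and the derivation of the ``in particular'' statement via $\tRe\,\xi'/\xi(0,\chi)=\tRe B(\chi)=\tRe B(\overline\chi)\le 0$ are identical to the paper's.
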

\begin{proof}  The proof is similar to that of Lemma \ref{lem2}, and so we will be 
brief. We  consider
\begin{equation}
\label{lem3integral}
\frac{1}{2\pi i} \int_{2-i\infty}^{2+i\infty} \frac{\xi^{\prime}}{\xi}(s,\chi) \frac{x^{s-1}}{s(s-1)} ds,
\end{equation}
and begin by evaluating the integral by moving the line of integration to the left.  There are
poles at $s=1$, $s=\rho_\chi$ and $s=0$ and therefore our integral above equals
$$
 \frac{\xi^{\prime}}{\xi}(1,\chi) -\frac{1}{x} \frac{\xi^{\prime}}{\xi}(0,\chi) +
\sum_{\rho_\chi} \frac{x^{\rho_\chi-1}}{\rho_\chi(\rho_\chi-1)}.
$$
Note that $\frac{\xi^{\prime}}{\xi}(1,\chi) = - \frac{\xi^{\prime}}{\xi}(0, \bar{\chi})$.  Further, since we are assuming GRH, we have for some $|\theta|\le 1$,
$$
\sum_{\rho_\chi} \frac{x^{\rho_\chi-1}}{\rho_\chi (\rho_\chi-1)} =\frac{ \theta }{\sqrt{x}} \sum_{\rho_\chi}
\frac{1}{|\rho_\chi|^2}= \frac{2\theta}{\sqrt{x}} |\text{Re}(B(\chi))|.
$$
Thus the quantity in \eqref{lem3integral} equals the LHS of the first identity claimed in 
our lemma.  

On the other hand, we may rewrite the integral \eqref{lem3integral} as
\begin{align}
\label{lem3.2}
\frac{1}{2\pi i} \int_{2-i\infty}^{2+i\infty}
\Big( \frac 12 \log \frac{q}{\pi} + \frac 12\psi_0\Big(\frac{s+\frak a}{2}\Big)
+ \frac{L^{\prime}}{L}(s,\chi)\Big) \frac{x^{s-1}}{s(s-1)} ds.
\end{align}
Now the first and the last terms above give
$$
\frac 12 \Big(1-\frac 1x\Big) \log \frac{q}{\pi}
- \sum_{n\le x} \frac{\Lambda(n)}{n} \chi(n) \Big(1-\frac nx\Big).
$$
If $\frak a=1$ then the middle term in \eqref{lem3.2} equals (there are simple
poles at $s=1$, $0$, and the negative odd integers)
$$
-\sum_{k=0}^{\infty} \frac{x^{-2k-2}}{(2k+1)(2k+2)} + \frac 12\psi_0(1)-
\frac{1}{2x} \psi_0\Big(\frac12\Big)
=E_1(x).
 $$
 If $\frak a=0$ then the middle term equals (here there are simple
 poles at $s=1$ and at the negative even integers, and a double pole at $s=0$)
 $$
 \frac{1}{2} \psi_0\Big(\frac 12\Big) +
 \frac{\log x+1+ \gamma/2}{x} - \sum_{k=1}^{\infty} \frac{x^{-2k-1}}{2k(2k+1)} = E_0(x).
 $$
This proves the first part of our lemma.

Now, by the functional equation and \eqref{Had2}, we have
$\text{Re} \frac{\xi^{\prime}}{\xi}(1,\chi) =- \text{Re} \frac{\xi^{\prime}}{\xi}(0,\overline{\chi})=
-\text{Re}(B(\chi))$.  Thus taking real parts in the identity just established, 
 we deduce the stated identity for $|\text{Re}(B(\chi))|$.
\end{proof}

\begin{lemma} \label{lem4} Assume RH.  For $x>1$ we have, for some $|\theta| \le 1$, 
$$ 
\sum_{n\le x} \frac{\Lambda(n)}{n} \Big( 1-\frac nx\Big) =  \log x - (1+\gamma) + 
\frac{\log (2\pi)}{x} - \sum_{n=1}^{\infty} \frac{x^{-2n-1}}{2n(2n+1)} 
+ 2 \theta \frac{|B|}{\sqrt{x}}. 
$$ 
\end{lemma}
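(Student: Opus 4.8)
The plan is to run the argument of Lemma~\ref{lem3} with $\zeta(s)$ in place of $L(s,\chi)$, i.e.\ with the completed function $\xi(s)=s(s-1)\pi^{-s/2}\Gamma(s/2)\zeta(s)$ of \eqref{Had3} in place of $\xi(s,\chi)$. The starting point is to evaluate the integral
$$
\frac{1}{2\pi i}\int_{2-i\infty}^{2+i\infty}\frac{\xi'}{\xi}(s)\,\frac{x^{s-1}}{s(s-1)}\,ds
$$
in two ways.

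First I would move the line of integration to $\tRe(s)\to-\infty$ (through vertical segments avoiding the poles; the shifted integrals tend to $0$ because $x>1$, the kernel $x^{s-1}/(s(s-1))$ decays like $|s|^{-2}$, and $\xi'/\xi$ is bounded in left half-planes by the functional equation). The integrand then picks up a simple pole at $s=1$ with residue $\tfrac{\xi'}{\xi}(1)$, a simple pole at $s=0$ with residue $-\tfrac1x\tfrac{\xi'}{\xi}(0)$, and simple poles at the non-trivial zeros $\rho$ contributing $\sum_\rho x^{\rho-1}/(\rho(\rho-1))$. From \eqref{Had3}--\eqref{Had4} one has $\tfrac{\xi'}{\xi}(0)=B$, and $\xi(s)=\xi(1-s)$ gives $\tfrac{\xi'}{\xi}(1)=-B$. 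Under RH each zero is $\rho=\tfrac12+i\gamma$, so $\rho-1=-\overline\rho$, whence $\rho(\rho-1)=-|\rho|^2$ and $|x^{\rho-1}|=x^{-1/2}$; together with $\sum_\rho|\rho|^{-2}=2\sum_\rho\tRe(1/\rho)=2|B|$ (from \eqref{Had4}, recalling $B<0$) this gives $\sum_\rho x^{\rho-1}/(\rho(\rho-1))=2\theta|B|/\sqrt x$ for some $|\theta|\le1$. Hence the integral equals $-B(1+1/x)+2\theta|B|/\sqrt x$.

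On the other hand, on the line $\tRe(s)=2$ I would insert the expansion $\tfrac{\xi'}{\xi}(s)=\tfrac1s+\tfrac1{s-1}-\tfrac12\log\pi+\tfrac12\psi_0(s/2)+\tfrac{\zeta'}{\zeta}(s)$ and integrate term by term. The $\tfrac{\zeta'}{\zeta}(s)$ term gives $-\sum_{n\le x}\tfrac{\Lambda(n)}{n}(1-n/x)$ via the Mellin identity $\tfrac1{2\pi i}\int_{(2)}\tfrac{y^{s-1}}{s(s-1)}\,ds=1-1/y$ for $y\ge1$ and $0$ otherwise. For the remaining (archimedean) terms I would shift left and sum residues: $\tfrac1s+\tfrac1{s-1}$ has double poles at $s=1$ (residue $\log x$) and $s=0$ (residue $-\log x/x$); $-\tfrac12\log\pi$ contributes $-\tfrac12\log\pi(1-1/x)$; and $\tfrac12\psi_0(s/2)$ has a simple pole at $s=1$ (residue $\tfrac12\psi_0(1/2)=-\log2-\gamma/2$), a double pole at $s=0$ (residue $(\log x+1+\gamma/2)/x$, obtained from $\psi_0(z)=-1/z-\gamma+O(z)$), and simple poles at $s=-2k$, $k\ge1$, contributing $-\sum_{k\ge1}x^{-2k-1}/(2k(2k+1))$.

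Equating the two evaluations and solving for the prime sum shows it equals the sum of the archimedean residues plus $B(1+1/x)+2\theta|B|/\sqrt x$. Substituting $B=\tfrac12\log(4\pi)-1-\gamma/2=\log2+\tfrac12\log\pi-1-\gamma/2$ from \eqref{Had4}, the $\log 2$ and $\log\pi$ contributions cancel: the constant part collapses to $\log x-(1+\gamma)$, the $1/x$ part to $\log(2\pi)/x$, and what is left is $-\sum_{k\ge1}x^{-2k-1}/(2k(2k+1))+2\theta|B|/\sqrt x$, which is exactly the claimed identity. I expect the only delicate points to be the justification that the shifted contour integrals vanish (routine, from the $|s|^{-2}$ decay of the kernel and the functional equation) and the residue at the double pole at $s=0$ in the archimedean part, which requires the Laurent expansion of $\psi_0$ at $0$; the numerical bookkeeping then works out exactly as in the proof of Lemma~\ref{lem3}.
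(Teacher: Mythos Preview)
Your proof is correct and follows essentially the same contour-shifting argument as the paper. The only minor difference is cosmetic: the paper integrates $-\tfrac{\zeta'}{\zeta}(s)\,\tfrac{x^{s-1}}{s(s-1)}$ directly (so the terms $\log x-(1+\gamma)$, $\tfrac{\log(2\pi)}{x}$, and the trivial-zero sum emerge immediately as residues at $s=1$, $s=0$, and $s=-2n$), whereas you integrate $\tfrac{\xi'}{\xi}(s)\,\tfrac{x^{s-1}}{s(s-1)}$ and then unpack the archimedean pieces separately, mirroring Lemma~\ref{lem3} more literally; after substituting the value of $B$ from \eqref{Had4} the two computations coincide term by term.
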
 
\begin{proof}  We argue as in Lemma \ref{lem3}, starting with 
$$ 
\frac{1}{2\pi i} \int_{2-i\infty}^{2+i\infty} -\frac{\zeta^{\prime}}{\zeta}(s) \frac{x^{s-1}}{s(s-1)}ds 
= \sum_{n\le x} \frac{\Lambda(n)}{n} \Big(1-\frac nx\Big). 
$$
Moving the line of integration to the left, and computing residues, we find that the 
above equals 
$$ 
\log x - (1+\gamma) + \frac{\log (2\pi)}{x} - \sum_{\rho} \frac{x^{\rho-1}}{\rho(\rho-1)} 
- \sum_{n=1}^{\infty} \frac{x^{-2n-1}}{2n(2n+1)}.
$$
The lemma follows.
\end{proof}

\begin{lemma}\label{propL1}   Let $q\ge 3$ and let $\chi \pmod q$
be a primitive Dirichlet character.  
 Suppose that GRH holds for $L(s,\chi)$. For any $x\geq 2$, there exists a real number $|\theta|\leq 1$ such that
\begin{align*}
\log|L(1,\chi)|&= \tRe\sum_{n\leq x}\frac{\chi(n)\Lambda(n)}{n\log n}
\frac{\log\left(\frac{x}{n}\right)}{\log x} +\frac{1}{\log x} \Big(\frac 12 \log \frac{q}{\pi} + 
\frac 12 \psi_0\Big(\frac{1+\frak a}{2}\Big)\Big) 
\\
&\hskip 1 in - \Big( \frac 1{\log x} + \frac{2\theta}{\sqrt{x}(\log x)^2}\Big)  |\tRe (B(\chi))| +\frac{2\theta}{x\log^2 x}.
\end{align*}
\end{lemma}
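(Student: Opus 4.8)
The plan is to begin from the Perron-type identity
$$
\frac{1}{2\pi i}\int_{2-i\infty}^{2+i\infty}\log L(s+1,\chi)\,\frac{x^s}{s^2}\,ds=\sum_{n\le x}\frac{\chi(n)\Lambda(n)}{n\log n}\log(x/n),
$$
which follows from the Dirichlet series $\log L(w,\chi)=\sum_{n}\Lambda(n)\chi(n)/(n^{w}\log n)$ taken at $w=s+1$, together with the evaluation $\frac{1}{2\pi i}\int_{2-i\infty}^{2+i\infty}\frac{y^s}{s^2}\,ds=\log y$ for $y\ge1$ (and $0$ for $0<y<1$). Dividing by $\log x$ produces exactly the arithmetic sum in the statement, so everything hinges on evaluating the contour integral on the left. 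Rather than shifting it directly (which would introduce the branch cuts of $\log L$), I would write $\log L(s+1,\chi)=\int_0^\infty\big(-\tfrac{L'}{L}(s+1+t,\chi)\big)\,dt$, valid on $\tRe s=2$ because $\log L(s+1+t,\chi)\to0$ as $t\to\infty$, and interchange the $t$-integral with the $s$-integral (legitimate by absolute convergence, since $|\tfrac{L'}{L}(s+1+t,\chi)|\ll 2^{-t}$ uniformly for $\tRe s=2$). This reduces the task to evaluating, for each fixed $t\ge0$,
$$
I(t):=\frac{1}{2\pi i}\int_{2-i\infty}^{2+i\infty}\Big(-\frac{L'}{L}(s+1+t,\chi)\Big)\frac{x^s}{s^2}\,ds,
$$
whose integrand is now \emph{meromorphic}.

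Next I would evaluate $I(t)$ by moving the line of integration to the far left, exactly as in the proof of Lemma \ref{lem2}. The poles crossed are the double pole at $s=0$, the simple poles at $s=\rho_\chi-1-t$ coming from the nontrivial zeros of $L(\cdot,\chi)$, and the simple poles at $s=-1-t-{\frak a}-2k$ ($k\ge0$) coming from the trivial zeros; computing residues gives
$$
I(t)=-\Big(\frac{L'}{L}\Big)'(1+t,\chi)-\frac{L'}{L}(1+t,\chi)\log x-\sum_{\rho_\chi}\frac{x^{\rho_\chi-1-t}}{(\rho_\chi-1-t)^2}-\sum_{k\ge0}\frac{x^{-1-t-{\frak a}-2k}}{(1+t+{\frak a}+2k)^2}.
$$
Integrating over $t\in[0,\infty)$, the first term yields $\frac{L'}{L}(1,\chi)$, the second yields $(\log x)\log L(1,\chi)$ (using $\log L(1,\chi)=\int_0^\infty(-\tfrac{L'}{L}(1+t,\chi))\,dt$), and the two sums over zeros give the error. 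Here GRH enters: for the nontrivial zeros, $|x^{\rho_\chi-1-t}|=x^{-1/2-t}$ and $|\rho_\chi-1-t|^2\ge\tfrac14+\gamma_\chi^2=|\rho_\chi|^2$, so that sum contributes at most $\tfrac{x^{-1/2}}{\log x}\sum_{\rho_\chi}|\rho_\chi|^{-2}=\tfrac{2|\tRe B(\chi)|}{\sqrt x\log x}$ in absolute value by \eqref{Had2}; the trivial-zero sum is dominated by its $k=0$ term and is at most $\tfrac{2}{x\log x}$ for $x\ge2$. Hence
$$
\frac{1}{2\pi i}\int_{2-i\infty}^{2+i\infty}\log L(s+1,\chi)\,\frac{x^s}{s^2}\,ds=(\log x)\log L(1,\chi)+\frac{L'}{L}(1,\chi)+\frac{2\theta|\tRe B(\chi)|}{\sqrt x\log x}+\frac{2\theta}{x\log x}.
$$

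To conclude, I would divide through by $\log x$, take real parts, and replace $\tRe\tfrac{L'}{L}(1,\chi)$ using the functional equation: from $\frac{\xi'}{\xi}(1,\chi)=\tfrac12\log\tfrac q\pi+\tfrac12\psi_0\big(\tfrac{1+{\frak a}}{2}\big)+\frac{L'}{L}(1,\chi)$ and $\tRe\frac{\xi'}{\xi}(1,\chi)=-\tRe B(\overline{\chi})=|\tRe B(\chi)|$ (by the functional equation and \eqref{Had2}, since $\tRe B(\chi)<0$ on GRH), one obtains $\tRe\frac{L'}{L}(1,\chi)=|\tRe B(\chi)|-\tfrac12\log\tfrac q\pi-\tfrac12\psi_0\big(\tfrac{1+{\frak a}}{2}\big)$; substituting and rearranging gives the asserted formula. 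The main obstacle I anticipate is the routine-but-delicate justification of pushing the contour of $I(t)$ off to $-\infty$ — estimating the horizontal and far-left segments (routing between zeros and using the standard bound $\frac{L'}{L}(s+1+t,\chi)\ll\log\big(q(|s|+t+2)\big)$ away from zeros), verifying that the residues collected along the way sum absolutely, and tracking the absolute constants carefully enough that the two error terms emerge precisely in the forms $\tfrac{2\theta}{\sqrt x(\log x)^2}|\tRe B(\chi)|$ and $\tfrac{2\theta}{x\log^2 x}$.
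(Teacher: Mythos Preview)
Your proposal is correct and is essentially the paper's own argument. The paper starts from
$\frac{1}{2\pi i}\int_{(2)}\big(-\tfrac{L'}{L}(\sigma+s,\chi)\big)\tfrac{x^s}{s^2}\,ds$, shifts the contour to obtain an explicit formula for $-\tfrac{L'}{L}(\sigma,\chi)$, and then integrates over $\sigma\in[1,\infty)$; your version simply applies Fubini in the other order (first writing $\log L(s+1,\chi)=\int_0^\infty\big(-\tfrac{L'}{L}(s+1+t,\chi)\big)\,dt$, then shifting the inner contour), which amounts to the same computation with $\sigma=1+t$.
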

\begin{proof}  For any $\sigma \ge 1$, we consider 
$$
 \frac{1}{2\pi i}\int_{2-i\infty}^{2+i\infty}
-\frac{L'}{L}(\sigma+s,\chi)\frac{x^{s}}{s^2}ds=
\sum_{n\leq x}\frac{\chi(n)\Lambda(n)}{n^{\sigma}}\log(x/n).
$$
Shifting the contour to the left, we find that the integral also equals
$$
-\frac{L'}{L}(\sigma,\chi)\log x-\left(\frac{L'}{L}\right)'(\sigma,\chi)
-\sum_{\rho_\chi}\frac{x^{\rho_\chi-\sigma}}{(\rho_\chi-\sigma)^2}-
 \sum_{n=0}^{\infty}\frac{x^{-2n-{\frak a}-\sigma}}{(2n+\frak a+ \sigma)^2}.
$$
Therefore 
\begin{align*}
-\frac{L'}{L}(\sigma,\chi)= &\sum_{n\leq x}\frac{\chi(n)\Lambda(n)}{n^{\sigma}}
\frac{\log(x/n)}{\log x}+ \frac{1}{\log x}\left(\frac{L'}{L}\right)'(\sigma,\chi)\\
&\hskip 1 in +\frac{\theta x^{\frac 12-\sigma}}{\log x}\sum_{\rho_\chi} \frac{1}{|\rho_\chi|^2} 
+
\frac{\theta x^{-\sigma}}{\log x}\sum_{n=0}^{\infty}\frac{x^{-2n}}{(2n+1)^2}.
\end{align*}
Integrating both sides over $\sigma$ from $1$ to $\infty$ and taking real parts 
we conclude that 
\begin{align*}
\log |L(1,\chi)|&= \tRe\sum_{n\leq x}\frac{\chi(n)\Lambda(n)}{n\log n} 
\frac{\log(x/n)}{\log x}- \frac{1}{\log x}\tRe\frac{L'}{L}(1,\chi)\\
&\hskip 1 in + \frac{\theta}{\sqrt{x}(\log x)^2}
 \sum_{\rho_\chi}\frac{1}{|\rho_\chi|^2} + \frac{2\theta}{x (\log x)^2}.
 \end{align*}
The lemma follows upon noting that 
\begin{align*}
-\tRe \frac{L^{\prime}}{L}(1,\chi) 
&= -\tRe \frac{\xi^{\prime}}{\xi}(1,\chi) + \frac 12\log \frac{q}{\pi} +\frac 12\psi_0\Big(\frac{1+{\frak a}}{2}\Big) 
\\
&= - |\tRe (B(\chi))| + \frac 12 \log \frac{q}{\pi} + \frac 12 
\psi_0 \Big(\frac{1+\frak a}{2}\Big). 
\end{align*}
\end{proof}

 \begin{lemma}
 \label{lem6}  Assume RH.  For all $x\ge e$ we have 
 $$ 
 \sum_{n\le x} \frac{\Lambda(n)}{n\log n} \frac{\log (x/n)}{\log x} = 
 \log \log x + \gamma - 1 +\frac{\gamma}{\log x} +\frac{2B\theta}{\sqrt{x}(\log x)^2} 
 + \frac{\theta}{3x^3 (\log x)^2}. 
 $$ 
 \end{lemma}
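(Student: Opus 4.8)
The plan is to follow the proof of Lemma \ref{propL1}, working with $\zeta(s)$ in place of $L(s,\chi)$ and carrying along the extra pole that $\zeta$ has at $s=1$. For $\sigma>1$ I start from the Perron-type identity
\[
\frac{1}{2\pi i}\int_{2-i\infty}^{2+i\infty}-\frac{\zeta'}{\zeta}(\sigma+s)\frac{x^s}{s^2}\,ds=\sum_{n\le x}\frac{\Lambda(n)}{n^\sigma}\log(x/n),
\]
and shift the contour to the far left. Beyond the double pole at $s=0$ (contributing $-(\zeta'/\zeta)'(\sigma)-(\zeta'/\zeta)(\sigma)\log x$), the nontrivial zeros $\rho$ (contributing $-\sum_\rho x^{\rho-\sigma}/(\rho-\sigma)^2$) and the trivial zeros at $-2,-4,\dots$ (contributing $-\sum_{k\ge1}x^{-2k-\sigma}/(2k+\sigma)^2$), there is now --- unlike in Lemma \ref{propL1} --- a simple pole at $s=1-\sigma$ arising from the pole of $\zeta$ at $1$, with residue $x^{1-\sigma}/(1-\sigma)^2$. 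Dividing by $\log x$ produces an exact formula for $\sum_{n\le x}\frac{\Lambda(n)}{n^\sigma}\frac{\log(x/n)}{\log x}$, valid for all $\sigma>1$. The contour shift and the interchanges of sum and integral are routine (shift to $\tRe(\sigma+s)=-(2N+1)$ and let $N\to\infty$; the zero sum converges since $\sum_\rho|\rho|^{-2}<\infty$).

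Next I integrate this identity over $\sigma\in(1,\infty)$. The left-hand side is a finite sum, hence continuous and bounded on $[1,\infty)$ and exponentially small as $\sigma\to\infty$; since $\Lambda(1)=0$ and $\int_1^\infty n^{-\sigma}\,d\sigma=1/(n\log n)$ for $n\ge2$, integrating it gives exactly the sum in the Lemma, and this integral equals $\lim_{\epsilon\to0^+}\int_{1+\epsilon}^\infty$. On the right-hand side the zero terms are harmless. Assuming RH, for $\sigma\ge1$ one has $|\rho-\sigma|^2=(\sigma-\tfrac12)^2+\gamma_\rho^2\ge\tfrac14+\gamma_\rho^2=|\rho|^2$, so $\big|x^{\rho-\sigma}/(\rho-\sigma)^2\big|\le x^{1/2-\sigma}/|\rho|^2$; since $\sum_\rho|\rho|^{-2}=2|B|$ by \eqref{Had4} and $\int_1^\infty x^{1/2-\sigma}\,d\sigma=x^{-1/2}/\log x$, the nontrivial-zero contribution is at most $2|B|/(\sqrt x(\log x)^2)$ in absolute value, giving the term $2B\theta/(\sqrt x(\log x)^2)$. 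Likewise, using $(2k+\sigma)^2\ge9$ and $\sum_{k\ge1}x^{-2k}\le x^{-2}/(1-e^{-2})$ for $x\ge e$, the trivial-zero contribution is at most $\tfrac13x^{-3}(\log x)^{-2}$; this is the only place the hypothesis $x\ge e$ is used.

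The real work is in the three surviving terms $-\frac{\zeta'}{\zeta}(\sigma)$, $-\frac1{\log x}\big(\frac{\zeta'}{\zeta}\big)'(\sigma)$ and $\frac{x^{1-\sigma}}{(1-\sigma)^2\log x}$: each blows up like a power of $(\sigma-1)^{-1}$ as $\sigma\to1^+$, and only their sum is integrable near $\sigma=1$. Integrating each from $1+\epsilon$ with elementary antiderivatives gives $\log\zeta(1+\epsilon)$, $\frac1{\log x}\frac{\zeta'}{\zeta}(1+\epsilon)$, and (after the substitutions $u=\sigma-1$, $v=u\log x$) $\int_{\epsilon\log x}^\infty e^{-v}v^{-2}\,dv$, respectively. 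Now I insert the Laurent data $\zeta(1+\epsilon)=\epsilon^{-1}+\gamma+O(\epsilon)$ and $\frac{\zeta'}{\zeta}(1+\epsilon)=-\epsilon^{-1}+\gamma+O(\epsilon)$, together with $\int_\delta^\infty e^{-v}v^{-2}\,dv=\delta^{-1}-1+\gamma+\log\delta+O(\delta)$ (one integration by parts plus $\int_\delta^\infty e^{-v}v^{-1}\,dv=-\gamma-\log\delta+O(\delta)$). Adding the three contributions, the $\log\epsilon$ terms cancel and the $(\epsilon\log x)^{-1}$ terms cancel, and letting $\epsilon\to0$ leaves precisely $\log\log x+\gamma-1+\gamma/\log x$. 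Combined with the two error bounds from the previous paragraph, this gives the Lemma. The only obstacle is the bookkeeping needed to make this triple cancellation transparent; everything else is routine.
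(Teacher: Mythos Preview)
Your proof is correct and is essentially the same as the paper's: both start from the contour-shifted identity for $\sum_{n\le x}\Lambda(n)n^{-\sigma}\log(x/n)$, integrate in $\sigma$ from $1+\epsilon$ to $\infty$, bound the zero and trivial-zero contributions exactly as you do, and extract the main term from the same triple cancellation among $\log\zeta(1+\epsilon)$, $\tfrac{1}{\log x}\tfrac{\zeta'}{\zeta}(1+\epsilon)$ and $\int_{\epsilon\log x}^{\infty}e^{-v}v^{-2}\,dv$. The only differences are cosmetic: you divide by $\log x$ before integrating (the paper does so afterward), and you invoke the standard expansion $E_1(\delta)=-\gamma-\log\delta+O(\delta)$ directly, whereas the paper writes out the equivalent identity $\int_0^1(1-e^{-y})\,dy/y-\int_1^{\infty}e^{-y}\,dy/y=\gamma$.
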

 \begin{proof}  Analogously to the proof of Lemma \ref{propL1} we begin with, for any 
 $\sigma>1$, 
 $$ 
 \frac{1}{2\pi i} \int_{2-i\infty}^{2+i\infty} -\frac{\zeta^{\prime}}{\zeta}(s+\sigma)\frac{x^{s}}{s^2} ds 
 = \sum_{n\le x} \frac{\Lambda(n)}{n^{\sigma}} \log (x/n), 
 $$ 
 and moving the line of integration to the left, this equals 
 $$ 
 \frac{x^{1-\sigma}}{(\sigma-1)^2} - \frac{\zeta^{\prime}}{\zeta}(\sigma)\log x 
 - \Big(\frac{\zeta^{\prime}}{\zeta}\Big)^{\prime} (\sigma) - \sum_{\rho} \frac{x^{\rho-\sigma}}{(\rho-\sigma)^2} - \sum_{n=1}^{\infty} \frac{x^{-2n-\sigma}}{(2n+\sigma)^2}. 
 $$  
 Let $\sigma_0$ denote a parameter that will tend to $1$ from above, and integrate the two 
 expressions above for $\sigma$ from $\sigma_0$ to $\infty$.  Thus 
 \begin{align*}
 \sum_{n\le x }\frac{\Lambda(n)}{n^{\sigma_0} \log n} \log(x/n) 
& = \int_{\sigma_0}^{\infty} \frac{x^{1-\sigma}}{(\sigma-1)^2}d\sigma + (\log x) \log \zeta(\sigma_0) 
 + \frac{\zeta^{\prime}}{\zeta}(\sigma_0) \\
 &\hskip .5 in + \frac{2B\theta}{\sqrt{x} \log x} + 
 \frac{\theta}{3x^3 \log x}.
 \end{align*}
 Now, with $\alpha= (\sigma_0-1) \log x$, 
 $$ 
 \int_{\sigma_0}^{\infty} \frac{x^{1-\sigma}}{(\sigma-1)^2} d\sigma 
 = (\log x) \int_{\alpha}^{\infty} \frac{e^{-y}}{y^2} dy, 
 $$ 
 and upon integrating by parts we have 
 $$ 
 \int_\alpha^{\infty} \frac{e^{-y}}{y^2} dy = \frac{e^{-\alpha}}{\alpha} - \int_1^{\infty} 
 \frac{e^{-y}}{y} dy + \int_{\alpha}^{1} \frac{1-e^{-y}}{y} dy + \log \alpha.
 $$
 Now let $\sigma_0 \to 1^+$, so that $\alpha \to 0^+$, and 
 note that $\log \zeta(\sigma_0) = - \log (\sigma_0-1) + O(\sigma_0-1)$, 
$-\zeta^{\prime}/\zeta(\sigma_0) = 1/(\sigma_0-1) - \gamma +O(\sigma_0-1)$, 
and that $\int_0^{1} (1-e^{-y})dy/y -\int_1^{\infty} e^{-y} dy/y = \gamma$.  The 
lemma then follows.
 \end{proof}

\section{Proof of Theorem \ref{thmexplicit}}

\noindent  Unlike the other results of this paper, the following simple lemma is unconditional.
%\begin{lemma}\label{lemextra2}
%Unconditionally, we have for any integer $k\ge 1$
%$$
%\sum_{n\le k} \frac{\Lambda(n)}{n} \ge \log k - 1 +\frac{1}{k}.
%$$
%Further, for any integer $N\ge 2$  we have
%$$
%\log (N-1) -2+\frac{4}{N} \leq \sum_{n\le N} \frac{\Lambda(n)}{n} \Big(1-\frac nN\Big)
%\leq \log N -1+\frac{1}{N} .
%$$
%\end{lemma}
%\begin{proof}  For any integer $k \ge 1$ we have
%$$
%\int_1^k \log t \, dt \le \sum_{n\le k} \log n = \sum_{n\le k} \sum_{d|n} \Lambda(d) =
%\sum_{d\le k} \Lambda(d) \Big[ \frac kd\Big]
%\le k \sum_{d\le k} \frac{\Lambda(d)}{d},
%$$
%and our first bound follows.
%Therefore
%\begin{align*}
%\sum_{n\le N} \frac{\Lambda(n)}{n} \Big(1-\frac nN\Big)
%&=\frac{1}{N} \sum_{k\le N-1} \sum_{n\le k} \frac{\Lambda(n)}{n}
%\ge \frac{1}{N} \sum_{k\le N-1}  \Big(\log k - 1 +\frac 1k\Big) \\
%&\ge \frac 1N \int_{1}^{N-1} \Big( \log t - 1 + \frac 1t\Big) dt
%= \log (N-1) -2 + \frac{4}{N}.
%\end{align*}
%This establishes the lower bound in our second assertion, and the upper bound 
 %follows from
%$$
%\int_1^{N} \log t \, dt \geq \sum_{n\leq N-1} \log n
%= \sum_{n\leq N-1} \Lambda(n) \left[\frac{N-1}{n}\right]
%\geq N \sum_{n\leq N} \frac{\Lambda(n)}{n}\left(1-\frac{n}{N}\right).
%$$
%\end{proof}

 \begin{lemma}\label{lem5}
Let $m\geq 3$ be an integer and $x\geq 2$ be a real number.  Then
$$\sum_{\substack{n\leq x\\(n,m)>1}}\Lambda(n)\log(x/n)
\leq \frac{1}{2}\omega(m)(\log x)^2,
$$
and
$$ \sum_{\substack{n\leq x\\(n,m)>1}}\frac{\Lambda(n)}{n}\left(1-\frac{n}{x}\right)\leq \sum_{\substack {n \\ (n,m)>1}} \frac{\Lambda(n)}{n} = \sum_{p|m}\frac{\log p}{p-1}.
$$
\end{lemma}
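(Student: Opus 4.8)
The plan is to reduce both inequalities to elementary estimates, exploiting that $\Lambda(n)$ is supported on prime powers $n=p^k$, and that the condition $(n,m)>1$ then forces $p\mid m$. Both parts are unconditional, so no analytic input is needed.

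For the first inequality, I would group the terms according to the prime $p$ dividing $n$, writing
$$
\sum_{\substack{n\le x\\ (n,m)>1}}\Lambda(n)\log(x/n)=\sum_{p\mid m}\ \sum_{\substack{k\ge 1\\ p^k\le x}}(\log p)\log(x/p^k).
$$
It then suffices to show that for each individual prime $p$ the inner sum $T_p:=\sum_{p^k\le x}(\log p)\log(x/p^k)$ is at most $\tfrac12(\log x)^2$; summing over the at most $\omega(m)$ primes dividing $m$ gives the claim. Writing $L=\log x$ and $t=\log p$, and letting $K=\lfloor L/t\rfloor$ be the number of terms (so $K\ge 0$ and $tK\le L$), one computes
$$
T_p=\sum_{k=1}^{K}t(L-kt)=tKL-\frac{t^2K(K+1)}{2}.
$$
Setting $u=tK\in[0,L]$ this becomes $T_p=uL-\tfrac12 u(u+t)\le uL-\tfrac12 u^2=\tfrac12 L^2-\tfrac12(L-u)^2\le \tfrac12 L^2$, which is exactly the required bound. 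The only mildly delicate point in the whole argument is this last step, which is just completing the square; everything else is bookkeeping.

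For the second inequality, I would note that for $n\le x$ one has $0\le 1-\tfrac nx\le 1$, so each summand is at most $\Lambda(n)/n\ge 0$; dropping the constraint $n\le x$ gives
$$
\sum_{\substack{n\le x\\ (n,m)>1}}\frac{\Lambda(n)}{n}\Big(1-\frac nx\Big)\le\sum_{\substack{n\\ (n,m)>1}}\frac{\Lambda(n)}{n}.
$$
Finally, again using that $\Lambda$ is supported on prime powers $p^k$ with $p\mid m$ in this sum, the right-hand side equals $\sum_{p\mid m}\sum_{k\ge 1}(\log p)/p^k=\sum_{p\mid m}(\log p)/(p-1)$, which finishes the proof. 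I do not anticipate any real obstacle here: the whole argument is elementary, the main content being the clean evaluation and estimation of $T_p$ in the first part.
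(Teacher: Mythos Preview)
Your proof is correct and follows essentially the same approach as the paper: both decompose the first sum by primes $p\mid m$, compute the inner sum $T_p=\sum_{p^k\le x}(\log p)\log(x/p^k)$ explicitly as $tKL-\tfrac{t^2K(K+1)}{2}$, and bound it by $\tfrac12(\log x)^2$ (the paper observes $T_p\le \tfrac12 K\log p\log x\le\tfrac12(\log x)^2$ using $(K+1)\log p>\log x$, while you complete the square; these are equivalent one-line manipulations). The second inequality is treated identically in both, as an immediate consequence of $0\le 1-n/x\le 1$ and the geometric-series evaluation.
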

\begin{proof}
If $(n,m)>1$ and $n\leq x$ is a prime power, then $n=p^{\alpha}$ where $p|m$ and $\alpha\leq \log x/\log p$. Hence
\begin{align*}
\sum_{\substack{n\leq x\\(n,m)>1}}\Lambda(n)\log(x/n)&=
\sum_{\substack{p\leq x\\ p|m}}\sum_{\alpha\leq \log x/\log p}\log p\log\left(\frac{x}{p^{\alpha}}\right)\\
&= \sum_{\substack{p\leq x\\ p|m}}\log p\log x\left[\frac{\log x}{\log p}\right] -
\sum_{\substack{p\leq x\\ p|m}}\log^2 p\sum_{\alpha\leq \log x/\log p}\alpha\\
&\leq \frac{1}{2}\sum_{\substack{p\leq x\\ p|m}}\log p\log x\left[\frac{\log x}{\log p}\right] \leq \frac{1}{2}\omega(m)(\log x)^2.
\end{align*}
The second part of the lemma is evident. 
\end{proof}

Let $q\ge 3000$ and let $H$ be a proper subgroup of $G=({\Bbb Z}/q{\Bbb Z})^*$.  
Let $X$ be such that all primes $\ell \nmid q$ with $\ell \le X$ lie in the 
subgroup $H$.   
Let $\tilde H$ denote the group of Dirichlet characters $\chi \pmod q$ 
such that $\chi(n) =1$ for all $n \in H$.  Thus ${\tilde H}$ is a subgroup of 
the group of Dirichlet characters mod $q$, and note that $|{\tilde H}|$ equals 
$[G:H]$.  The assumption that all primes $\ell \le X$ with $\ell \nmid q$ 
lie in $H$ is equivalent to $\chi(\ell)=1$ for all $\chi \in {\tilde H}$.   Since $H$ is 
a proper subgroup, there exists a non-principal character $\chi \pmod q$ in ${\tilde H}$.  
Our strategy is to compare upper and lower bounds for $\tRe (S(X,\chi))$ where, 
as defined earlier, $S(X,\chi) = \sum_{n\le X} \Lambda(n) \chi(n) \log (X/n)$.  
Note that the character $\chi$ need not be primitive, and we let $\tilde \chi \pmod{\tilde q}$ 
denote the primitive character that induces $\chi$.   

 \subsection{Proof of Part 1}  Here we assume that 
 $X= (\log q+B(q))^2 \ge (\log q)^2 \ge 64$, and 
 seek a contradiction.  The assumption that $\chi(\ell)=1$ for all 
 primes $\ell$ below $X$ with $\ell\nmid q$  gives  
 $$ 
 S(X,\chi) = \sum_{n\le X} \Lambda(n) \log (X/n)  - \sum_{\substack{n\le X\\ (n,q)>1}} 
 \Lambda(n) \log (X/n).
 $$ 
 Using Lemma \ref{lem1} and Lemma \ref{lem5} (recall that $\chi(n) = 1$ for all $n\leq X$) we obtain the 
lower bound
\begin{equation} 
\label{lower2}
\tRe (S(X, \chi)) \geq X - \sqrt{X}\left(2|B|+2\omega(q)\frac{(\log\log q)^2}{\log q}\right) - \log (2\pi)\log X - 1 -2|B|,
\end{equation} 
where we used that $X\ge (\log q)^2$ and so $(\log X)^2/\sqrt{X} \le 4 (\log \log q)^2/\log q$.  

Next we work on the upper bound for $\tRe (S(X,\chi))$.  Note that 
\begin{equation} 
\label{eqn3.2} 
S(X,\chi) - S(X,\tilde\chi) = \theta \sum_{\substack {n\le X\\ (n,q/{\tilde q})>1} } 
\Lambda(n) \log(X/n) 
= \theta \frac{\omega(q/\tilde q)}{2} (\log X)^2. 
\end{equation}
Using Lemma \ref{lem2} for $\tilde \chi$,  and since ${\tilde E}_{\frak a}(X) \le -11/4$ for $X\ge 64$, 
we find 
\begin{equation} 
\label{eqn3.3} 
\tRe (S(X,\tilde \chi)) \le (2\sqrt{X}+2+\log X) |\tRe(B(\tilde \chi))| + 
\frac 12 \Big( \log \frac{\tilde q}{\pi} \Big) \log X - \frac{11}{4}. 
\end{equation}

We shall bound $|\tRe (B(\tilde \chi))|$ above using 
Lemma \ref{lem3}.  First note that by Lemmas \ref{lem4} and \ref{lem5}
\begin{eqnarray*}
\sum_{n\leq X} \frac{\Lambda(n){\tilde \chi}(n)}{n} \left(1-\frac{n}{X}\right)
&=& \sum_{n\leq X} \frac{\Lambda(n)}{n} \left(1-\frac{n}{X}\right) - \sum_{\substack{n\leq X\\(n, \tilde q) > 1}} \frac{\Lambda(n)}{n} \left(1-\frac{n}{X}\right)\\
&\geq&\max \Big(0,  \log (X) - \frac{8}{5}- \sum_{p|q} \frac{\log p}{p-1}\Big)\\ 
&\ge& \max\Big( 0, 2\log \log q -\frac{8}{5} -\sum_{p|q} \frac{\log p}{p-1}\Big) = A(q).\\
\end{eqnarray*} 
For $X \geq 64$, we have that $(1-\frac{1}{\sqrt{X}})^{-2} \leq 1 + \frac{5}{2\sqrt{X}}$ and $E_\mathfrak a (X) <-1/4$.    
Using this in Lemma \ref{lem3}, we get 
\begin{align*}
|\tRe(B(\tilde \chi))| &\le \Big( 1-\frac{1}{\sqrt{X}}\Big)^{-2} \frac 12 \log \frac{\tilde q}{\pi} - A(q) + E_{\frak a}(X) \le \frac{\log\tilde q}{2}+ \frac 25 -A(q).
\end{align*}
We use this bound in \eqref{eqn3.3} and combine that with \eqref{eqn3.2} to obtain 
an upper bound for $\tRe(S(X,\chi))$.  Since $\omega(q/{\tilde q}) \le (\log (q/\tilde q))/\log 2$,
 the resulting upper bound is largest when $\tilde q=q$.  
Thus 
\begin{align*}
\tRe (S(X,\chi)) &\le (2\sqrt{X}+2+ \log X) \Big(\frac{\log q}{2} + \frac 25 -A(q)\Big) + \frac{\log X}{2} \Big(\log 
 \frac q\pi\Big) - \frac {11}{4}  \\
 &\le \sqrt{X} \Big( \log q + \frac 45 -2 A(q) + \frac{\log X}{\sqrt{X}} \log q + \frac{\log q}{\sqrt{X}}\Big)-\frac{39}{20} \\ 
 &\le \sqrt{X} \Big( \log q + 2\log \log q + \frac 95 -2 A(q)\Big)-\frac{39}{20}.
\end{align*}
 
 Comparing the upper bound above with the lower bound \eqref{lower2} gives the 
 desired contradiction.

\subsection{Proof of Part 2}  Here we suppose that $X \geq (\log q)^2$ and 
seek a contradiction.  The proof follows the same lines as Part 1, with 
simplifications due to the assumption that $q$ has no prime factors below $X$, and 
a little more care with constants.   
Thus using Lemma \ref{lem1} 
we have the lower bound 
\begin{equation}\label{eqlow1}
\tRe S(X,\chi)= \sum_{n\le X} \Lambda(n) \log (X/n)
\ge X - 2|B|(\sqrt{X}+1) - \log (2\pi) \log X - 1.
\end{equation}

Now we turn to the upper bound.   Since $q$ has no prime factors below $X$ 
we have $S(X,\chi) = S(X,\tilde \chi)$.   
From Lemma \ref{lem3} and Lemma \ref{lem4} we have that 
$$ 
|\tRe (B(\tilde \chi))| \le \Big(1-\frac{1}{\sqrt{X}}\Big)^{-2} \Big( \frac 12 \log \frac {\tilde q}\pi - \log X + 
\frac{8}{5}
+ E_{\frak a}(X)\Big).  
$$ 
For $X\ge 64$ we have that $(1-1/\sqrt{X})^{-2} \le (1+5/(2\sqrt{X}))$, and we 
may check that for ${\frak a}=0$ or $1$, 
$$ 
-\frac 12 \log \pi -\log X +\frac 85 +E_{\frak a}(X) \le -\log X + \frac 76.
$$
Therefore, as $X\ge (\log q)^2$, and $\tilde q\le q$,
\begin{align*}
|\tRe (B(\tilde \chi))| &\le \Big( 1+ \frac{5}{2\sqrt{X}}\Big) \Big( \frac 12 \log \tilde q - \log X + \frac 76\Big) 
\\
&\le \frac {\log q}{2} +\frac 54 + \Big(1+\frac{5}{2\sqrt{X}}\Big) \Big(\frac 76- \log X\Big)\\
&\le \frac{\log q}{2} -\frac{\log X}{2} - \frac{4}{7}, 
\end{align*}
where the last bound follows upon using $X\ge 64$ together with a little calculus.
Using this in Lemma \ref{lem2}, and noting that $\tilde{E}_\af(x) < 0$ for $\frak a=0$ or $1$ and 
$x\ge 3$, we get  (recall $X\geq (\log q)^2$)
\begin{align} \label{equp1}
\tRe S(X,\chi)
&\le  (2+2\sqrt{X} + \log X) \Big( \frac 12 \log q -\frac 12\log X -\frac 47\Big) + \frac 12 \Big(\log \frac q\pi \Big) \log X \notag \\
&\le \sqrt{X}\log q + \log X\Big( - \frac 12\log (\pi X) -\frac {11}{7} \Big) 
-\frac {\sqrt{X}}7  - \frac 87 \notag\\
&\le \sqrt{X}\log q-\frac{\sqrt{X}}{7}-2\log X-\frac{8}{7}.
\end{align}
Comparing the bounds \eqref{eqlow1} and \eqref{equp1}  gives 
a contradiction, which proves the claimed result.  
 
 \section{The least prime in a given coset}  
  \noindent  As before, let ${\tilde H}$ denote the group of characters 
 $\chi \pmod q$ with $\chi(n)=1$ for all $n\in H$.  Recall that $|\tilde H|=
 [G:H]=h$, and given a coset $aH$ we have the orthogonality relation 
 \begin{equation}
 \label{cosorth}
 \frac 1h \sum_{\chi \in {\tilde H}} \overline{\chi(a)} \chi(n) 
 = \begin{cases}
 1 &\text{if  } n\in aH \\ 
 0 &\text{if  } n\notin aH.\\
 \end{cases}
 \end{equation}
Note that $q\geq 20000$.  Let $X$ be such that no prime below $X$ lies in the coset $aH$, and we assume below that $X\ge \max(10^9, ((h-1)\log q)^2)$.    

Set $S(X, \chi) = \sum_{n\le X} \Lambda(n) \chi(n) \log (X/n)$, 
 so that 
\begin{equation} 
\label{eqnnew1} 
\sum_{\substack{n\le X \\ n\in aH }} \Lambda(n) \log (X/n)  = \frac 1h \sum_{\chi \in \tilde H} 
\overline{\chi(a)} S(X,\chi).
\end{equation}
Our strategy is again to obtain upper and lower bounds for the quantity above, 
and then to derive a contradiction.

\subsection{Preliminary bounds}
 
First, consider the principal character $\chi_0 \pmod q$ which 
certainly belongs to the group $\tilde H$.  Since 
$$ 
 S(X,\chi_0) = \sum_{n\le X} \Lambda(n) \log (X/n) - 
 \sum_{\substack{n\le X \\ (n,q)>1} } \Lambda(n) \log (X/n), 
$$ 
using Lemmas \ref{lem1} and \ref{lem5}, we obtain that for $X\ge \max((h-1)^2\log^2 q, 10^9)$,
\begin{align}\label{eqnAPlem1}
|S(X, \chi_0) - X| &\leq 2|B|(\sqrt{X}+1) + (\log 2\pi)\log X + 1 + \frac{\omega(q)}{2} (\log X)^2\notag\\
&\le \frac{\sqrt{X}}{20} +\frac{\omega(q)}{2}(\log X)^2.
\end{align}  

For a non-principal character $\chi \in \tilde H$, 
let ${\tilde \chi} \pmod {\tilde q}$ denote the primitive character that induces $\chi$.  
By Lemma \ref{lem2} we find that 
\begin{equation}
\label{4.4} 
|S(X, \tilde \chi)| \le (2\sqrt{X}+2) |\tRe (B(\tilde \chi))| + \Big| \frac{\xi^{\prime}}{\xi}(0,
\tilde \chi)\Big| \log X + \Big| \frac12 \Big(\log \frac{\tilde q}{\pi}\Big) \log X + {\tilde E}_{\frak a}(X)\Big|.
\end{equation}
Now for $X\ge 10^9$  we know that $\tilde E_{\frak a}(X)<0$, and examining the 
 definition of ${\tilde E}_{\frak a}$ we find that 
\begin{equation} 
\label{4.41} 
\Big|\frac 12\Big(\log \frac{\tilde q}{\pi} \Big) \log X + {\tilde E}_{\frak a}(X)\Big| \le 
\frac 12 (\log X)\log \max\Big(\frac q{\pi},2X\Big).
\end{equation}
Recall from \eqref{eqn3.2} that 
\begin{equation} 
\label{4.5} 
|S(X,\chi)| \le |S(X,\tilde \chi)| + \omega(q/\tilde q) \frac{(\log X)^2}{2}. 
\end{equation}
We now invoke Lemma \ref{lem3}, taking there $x= 100$.  Since 
$|\xi^{\prime}/\xi(0,\tilde \chi)| = |\xi^{\prime}/\xi(0,\overline{\tilde \chi})| \ge |\tRe (B(\tilde \chi))|$ 
we obtain 
$$ 
\Big|\frac{\xi^{\prime}}{\xi}(0,\tilde \chi)\Big| \Big(1-\frac{2}{10} -\frac 1{100}\Big) 
\le \frac 12 \frac{99}{100} \log \frac{\tilde q}{\pi} + \sum_{n\le 100} \frac{\Lambda(n)}{n} \Big(1-\frac{n}{100}\Big).  
$$
It follows with a little computation that 
\begin{equation} 
\label{4.6} 
\Big| \frac{\xi^{\prime}}{\xi} (0,\tilde \chi) \Big| \le 
\frac{2}{3} \log \frac{  q}{\pi} + 4.
\end{equation}
Further, using Lemma \ref{lem3} (taking there $x=X$, and since $E_{\frak a}(X)\le -2/7$ for 
$X\ge 10^9$), we find that 
\begin{align*}
|\tRe (B(\tilde \chi))| 
&\le \Big(1-\frac{1}{\sqrt{X}}\Big)^{-2} \Big( \frac 12\Big(1-\frac 1X\Big) \log \frac{\tilde q}{\pi} - 
\tRe \sum_{n\le X} \frac{\Lambda(n)}{n} \tilde \chi(n) \Big(1-\frac nq\Big) -\frac 27 \Big). 
%&\le \Big(1-\frac 1{\sqrt{q}}\Big)^{-2} \Big( \frac 12\Big(1-\frac 1q\Big) \log \frac{q}{\pi} - \tRe \sum_{n\le q} 
%\frac{\Lambda(n)}{n} \chi(n) \Big(1-\frac nq\Big)\Big). 
\end{align*} 
Further, 
\begin{align*}
\tRe \sum_{n\le X} \frac{\Lambda(n)}{n} 
\tilde \chi(n)\Big(1-\frac nX\Big) &\ge 
\tRe \sum_{n\le X} \frac{\Lambda(n)}{n} 
 \chi(n) \Big(1-\frac nX\Big) - \sum_{\substack{ n\le X \\ (n,q/\tilde q)>1}} 
\frac{\Lambda(n)}{n} \Big(1-\frac{n}{X}\Big)\\
&\ge \tRe \sum_{n\le X} \frac{\Lambda(n)}{n} 
 \chi(n) \Big(1-\frac nX\Big) - \Big(1-\frac 1X\Big) \sum_{p|(q/\tilde q)} \frac{\log p}{p-1}.
\end{align*} 
 Therefore, we find that for $X\ge 10^9$, the quantity $(2\sqrt{X}+2) |\tRe(B(\tilde \chi))|$ is 
 bounded by 
 \begin{align*} 
 \le \Big(\sqrt{X}+\frac{19}{6} \Big) 
 \Big( \log \frac{\tilde q}{\pi} +2 \sum_{p|(q/\tilde q)} \frac{\log p}{p-1} -2\tRe \sum_{n\le X} 
 \frac{\Lambda(n)}{n}\chi(n) \Big(1-\frac{n}{X}\Big) -\frac 47\Big). 
 \end{align*}
 Consider the quantity above together with the term $\omega(q/{\tilde q})(\log X)^2/2$ 
 appearing in \eqref{4.5}.  Since $\log \tilde q \le \log q - \sum_{p|(q/\tilde q)} \log p$, and 
 $X\ge 10^{9}$, we may check that the sum of these two quantities is largest when 
 $q/{\tilde q}$ is $6$.   Putting in this worst case bound, we find that 
 \begin{align*}
 (2\sqrt{X}+2)&|\tRe (B(\tilde \chi))| + \omega(q/\tilde q)\frac{(\log X)^2}{2} 
 \le (\log X)^2 \\
 &+ \Big( \sqrt{X}+\frac{19}{6} \Big) \Big(\log \frac{q}{\pi} +\log 2 -\frac 47 -2\tRe 
 \sum_{n\le X} \frac{\Lambda(n)}{n} \chi(n)\Big(1-\frac nX\Big) \Big).
 \end{align*}
 Combining this estimate with \eqref{4.4}, \eqref{4.41}, \eqref{4.5}, and \eqref{4.6} we conclude 
 that 
 \begin{align}
 \label{4.8}
  |S(X,\chi)| &\le \Big(\sqrt{X}+\frac{19}{6}\Big) \Big(\log q-\frac{51}{50} -2\tRe\sum_{n\le X} 
 \frac{\Lambda(n)}{n}\chi(n) \Big(1-\frac nX\Big) \Big) +(\log X)^2
\nonumber \\
 &+ \Big(\frac 23 \log \frac { q}\pi + 4\Big) (\log X) +  \frac{(\log X)}{2} \log \max\Big(\frac{q}{\pi},2X\Big).
  \end{align}

We sum the above over the $h-1$ non-principal characters of $\tilde H$.  
Note that, using the orthogonality relations and Lemma \ref{lem4}, 
$$ 
-\tRe \sum_{\substack{ \chi \in \tilde H \\ \chi\neq \chi_0} } \sum_{n\leq X}
\frac{\Lambda(n)}{n} \chi(n) \Big( 1-\frac nX \Big) 
\le  \sum_{n\le X} \frac{\Lambda(n)}{n} \Big(1-\frac nX\Big) \le \log X - \frac 32.  
$$ 
Thus, we conclude that 
 \begin{align*}
 \sum_{\substack{\chi\in \tilde H \\ \chi \neq \chi_0}} |S(X,\chi) | 
&\le \Big(\sqrt{X}+\frac{19}{6}\Big) \Big( (h-1)\log q - (h+2) +2\log X\Big) 
 +(h-1)(\log X)^2 \\
 &+(h-1)\log X  \Big(\frac 23\log \frac q\pi + 4+\frac12 \log \max\Big(\frac q\pi,2X\Big)\Big).
\end{align*}
Using that $q\ge 20000$ and  $X\ge \max(10^9, ((h-1) \log q)^2)$, we  may simplify the above bound, and 
obtain that 
\begin{align} 
\label{eqn4.9} 
\sum_{\substack{\chi\in \tilde H \\ \chi \neq \chi_0}} |S(X,\chi) | 
&\le \sqrt{X} \Big( (h-1)\log q - h +\frac{6}{5} + 3\log X\Big) + (h-1)(\log X)^2 \nonumber\\ 
&+(h-1) \frac{\log X}{2} \log \max \Big( \frac{q}{\pi},2X\Big). 
\end{align}
 Combining this with \eqref{eqnAPlem1} we obtain a lower bound 
 for the LHS of \eqref{eqnnew1}.

%write $\log \max(\frac q\pi,2X) \leq \log \frac q\pi + \log 2X$ and get from (\ref{eqnsec4nonprincipal}) that
%\begin{align} 
%\label{eqn4NonprincipalX} 
%\sum_{\substack{\chi\in \tilde H \\ \chi \neq \chi_0}} |S(X,\chi) | 
%&\le \sqrt{X} \Big( (h-1)\log q - (h-1) +\frac{19}{6}\log X - 2\Big) \notag\\
%& +(h-1)\log X\left(\log X + 4+\frac12 \log 2X\right) + \frac{19}{3} \log X,\notag\\
%&\le \sqrt{X} \Big( (h-1)\log q - \frac{24}{25}h +\frac{19}{6}\log X - 1\Big).
%\end{align}

%&\le \sqrt{X} \Big( (h-1)\log q - h + \frac{13}{5} + \frac{31}{10} \log X\Big) + (h-1)(\log X)^2 
%\notag\\
%&+ (h-1) \frac{\log X}{2} \log \max\Big( \frac{q}{\pi},2X\Big). 
%\end{align}

\subsection{Upper bound for (\ref{eqnnew1})}
  Now we turn to the upper bound.  By assumption there 
are no primes $p\le X$ with $p \in aH$.   Therefore  
\begin{equation*}
\sum_{\substack{n\le X\\ n\in aH}} \Lambda(n) \log (X/n) 
\le \sum_{\substack{p^{2k \le X} \\ p^{2k} \in aH }} (\log p) \log(X/p^{2k})  
+ \sum_{p^{2k+1} \le X } (\log p) \log (X/p^{2k+1}). 
\end{equation*} 
The second term above is, using Lemma \ref{lem1}, and a little computation using $X \ge 10^9$, 
\begin{align*}
\label{eqn4.6}
&\le \sum_{k\le \log X/(2\log 2)-1/2} (2k+1) \sum_{p\le X^{1/(2k+1)}} (\log p) \log (X^{1/(2k+1)}/p) 
\notag \\
&\le \sum_{k\le \log X/(2\log 2)-1/2} (2k+1)\Big (X^{1/(2k+1)} + \frac{X^{1/(4k+2)}}{20}\Big) 
\le \frac{\sqrt{X}}{7},
\end{align*}
so that
\begin{equation}
\label{eqn4.5}
\sum_{\substack{n\le X\\ n\in aH}} \Lambda(n) \log (X/n) 
\le \sum_{\substack{p^{2k} \le X\\ p^{2k} \in aH }} (\log p) \log(X/p^{2k}) + \frac{\sqrt{X}}{7}. 
\end{equation}

\subsection{The least prime in an arithmetic progression}

\noindent Using a computer we checked Corollary \ref{thmexplicit2} when $q\le 20000$.  
Suppose now that $q>20000$ and let $a\pmod q$ be an arithmetic progression with $(a,q)=1$ such that no prime below $X =(\phi(q) \log q)^2$ is $\equiv a\pmod q$.  

If $q$ has at least six distinct prime factors 
then $\phi(q) \ge \phi(2\cdot 3\cdot 5\cdot 7\cdot 11\cdot 13) = 5760$, 
while if $q$ has at most five prime factors then $\phi(q)\ge q \phi(2\cdot 3\cdot 5\cdot 7\cdot 11)/(2\cdot 3\cdot 5\cdot 7\cdot 11) 
>4155$.   In either case, $\phi(q) \ge 4156$.  By similar elementary arguments 
we may check that for $q>20000$ we have $2^{\omega(q)} \le q^{3/7}$, and that $\phi(q) \ge q^{5/6}$.  

Thus $X= (\phi(q)\log q)^2 \ge 10^9$, and we may use our work above; note that here $H$ is the 
trivial subgroup consisting of just the identity, and $h=\phi(q)$.  First we work out the lower bound for the quantity in
 (4.2).   Using (4.3) and (4.9), together with the bounds $2^{\omega(q)}\le q^{3/7}$ and $X\le (q\log q)^2$, we 
 find that 
 $$
\phi(q) \sum_{\substack{{n\le x}\\ {n\equiv a\pmod q}}} \Lambda(n) \log(X/n) 
\ge X -\sqrt{X} \Big( (\phi(q)-1)\log q  - \frac{24}{25} \phi(q) + \frac 32 + 3(\log X)\Big). 
$$ 

To obtain a corresponding upper bound, it remains to estimate the first term in (\ref{eqn4.5}).  
Note that the number of square roots of $a \pmod q$ is bounded by $2^{\omega(q)+1} \le 2q^{3/7}$.  Since $(\log p)(\log X/p^{2k}) \le (\log X)^2/8$ we find that 
\begin{align}
\label{eqn:APup}
\sum_{\substack{p^{2k} \le X \\ p^{2k} \in aH }} (\log p) \log(X/p^{2k})
&\leq \frac{(\log X)^2}{8} \sum_{\substack{n \le \sqrt{X} \\ n^2 \equiv a \textup{ (mod $q$)} }} 1  
\leq  q^{3/7} \frac{(\log X)^2}{4} \Big(\frac{\sqrt{X}}{q}+1\Big). \notag
\end{align}
Therefore 
$$ 
\sum_{\substack{{n\le x}\\ {n\equiv a\pmod q}}} \Lambda(n) \log(X/n) \le 
\frac{\sqrt{X}}{7} +  q^{3/7} \frac{(\log X)^2}{4} \Big(\frac{\sqrt{X}}{q}+1\Big).
$$ 
Using our bounds $\phi(q)\ge q^{5/6}$, $q >20000$ and $(q\log q)^2 \geq X\ge 10^9$, a little computation shows that 
the upper and lower bounds derived above give a contradiction. 

%Then injecting our bound for $\omega(q)$ into (\ref{eqnAPlem1}) gives
%\begin{equation}
%\label{eqn:principbound}
%S(X, \chi_0) \ge X -  \frac{\sqrt{X}}{20} -\frac{8}{25}\frac{\sqrt{X}}{h-1}(\log X)^2.
%\end{equation} 

%Combining this with (\ref{eqn4NonprincipalX}) (\ref{eqn4.5}), and (\ref{eqn:APup}) gives that
%\begin{align}
%\label{eqn:4.4final}
%\sqrt{X} &\leq (h-1)\log q + \frac{19}{6}\log X + \frac{8}{25}\frac{(\log^2X)}{h-1} \notag \\
%&+ h\left(\frac{1}{7} - \frac{24}{25}+ 2q^{3/7} \log^2 X \left(\frac{1}{q}+\frac{1}{\sqrt{X}}\right)\right)
%\end{align}

%Using our bounds $\phi(q)\ge q^{5/6}$, $q >20000$ and $(q\log q)^2 \geq X\ge 10^9$, a little computation with \eqref{eqn:4.4final}  implies that $\sqrt{X} < \phi(q)\log q$, which is a contradiction.   

\subsection{The general case}
In general we bound the first term in (\ref{eqn4.5}) crudely by
\begin{equation*}  
\le 2 \sum_{n\leq \sqrt{X}} \Lambda(n)\log\frac{\sqrt{X}}{n} 
\le 2\Big(\sqrt{X} + \frac{X^{1/4}+1}{20}\Big), 
\end{equation*}
so that 
\begin{equation*}
\sum_{\substack {{n\le X } \\ {n\in aH} } } \Lambda(n) \log (X/n) 
\le \frac{11}{5} \sqrt{X}. 
\end{equation*}

On the other hand, using (4.3) and (4.9) together with the bounds $2^{\omega(q)} \le q^{3/7}$ and 
$X\ge \max(10^9, ((h-1)\log q)^2)$,  we obtain 
$$ 
h \sum_{\substack {{n\le X } \\ {n\in aH} } } \Lambda(n) \log (X/n) \ge X- \sqrt{X} \Big( (h-1)\log q - \frac{24}{25}h +\frac 54 
+ \frac 72 (\log X) + \frac{(\log X)^2}{3(h-1)}\Big). 
$$
Comparing this with our upper bound, we must have 
$$ 
\sqrt{X} \le (h-1)\log q + \frac 54 h + \frac 54 +\frac{7}{2} (\log X) + \frac{(\log X)^2}{3(h-1)}.
$$ 
Since $X\ge 10^9$, we have $\frac 72 \log X+ \frac{(\log X)^2}{3} \le \sqrt{X}/100$, so that from the 
above estimate we may first derive that $\sqrt{X} \le 2(h-1) \log q$.  Now inserting this bound into our 
estimate, we obtain the refined bound 
$$ 
\sqrt{X} \le (h-1) \log q + \frac 54(h+1) + 7 \log (2(h-1)\log q) + \frac{4 (\log (2(h-1)\log q))^2}{3(h-1)}. 
$$ 
The bound stated in Theorem \ref{thmcoset} follows from this with a little calculation. 

%We have from \eqref{eqn:principbound}, (\ref{eqn4NonprincipalX})  (\ref{eqn:4.2crudeupper}) and $X\ge 10^9$ that
%\begin{align}
%\label{eqn:4.13}
%X &\leq 
%\sqrt{X} \left((h-1)\log q + \frac{5}{4} h + \frac{15}{4} \log X+\frac{8}{15}\frac{\log^2X}{h-1}\right)\\ \notag
%\end{align}
%From the above, we see that $X \leq 2\left((h-1)\log q\right)^2$, since otherwise \eqref{eqn:4.13} gives $\log X < 0$.  Assuming this bound, we have that 
%$$\frac{\log^2X}{h-1} \le 4\left((\log \log q)^2 + \log 2 + \frac{\log^2(h-1)}{h-1}\right)
%\leq 4\left((\log \log q)^2 + 5/4\right).
%$$Using that $q>20000$ and $\log (h-1) < 7/25 h$  we have that
%\begin{align}
%\label{eqnAPgen}
%\sqrt{X} \leq (h-1)\log q + \frac{5}{2} h + 5(\log\log q)^2,
%\end{align}as desired.

\section{Explicit bounds for $|L(1,\chi)|$: Proof of Theorem \ref{L1chi}}

\subsection{Upper bounds for $L(1,\chi)$}
Let $q\geq 10^{10}$ be a positive integer and $x\geq 100$ be a real number to be chosen later.   Lemma \ref{propL1} gives\begin{align*} 
\log |L(1,\chi)|&\le \tRe \sum_{n\le x} \frac{\chi(n)\Lambda(n)}{n\log n} \frac{\log (x/n)}{\log x} 
+ \frac{1}{2\log x} \Big(\log \frac q\pi + \psi_0\Big(\frac{1+\frak a}{2}\Big) \Big) 
\\ 
&- \Big(\frac{1}{\log x} -\frac{2}{\sqrt{x}(\log x)^2}\Big) |\tRe (B(\chi))| + \frac{2}{x(\log x)^2}. 
\end{align*}
Invoking Lemma \ref{lem3} (with the same value of $x$ above), we have 
$$ 
|\tRe (B(\chi))| \ge \Big(1+\frac {1}{\sqrt{x}}\Big)^{-2} \Big( \frac 12 \Big(1-\frac 1x\Big) 
\log \frac{q}{\pi} -\tRe  \sum_{n\le x} \frac{\Lambda(n)\chi(n)}{n} \Big(1-\frac nx\Big) + E_{\frak a}(x)\Big). 
$$ 
Now for $x\ge 100$, 
$$ 
-E_{\frak a}(x) \Big(1+\frac1{\sqrt{x}}\Big)^{-2} \Big(\frac{1}{\log x} -\frac{2}{\sqrt{x}(\log x)^2} 
\Big) + \frac{1}{2\log x} \psi_0 \Big(\frac{1+\frak a}{2}\Big) + \frac{2}{x(\log x)^2} \le 0, 
$$ 
and therefore 
\begin{align*} 
\log |L(1,\chi)|
 &\le \tRe \sum_{n\le x} \frac{\chi(n)\Lambda(n)}{n\log n} \frac{\log (x/n)}{\log x} 
+ \frac{\log (q/\pi)}{(\sqrt{x}+1)\log x} \Big(1+\frac{1}{\log x} \Big) \\
&+ \Big(\frac{1}{\log x} -\frac{2}{\sqrt{x}(\log x)^2} \Big) 
\Big(1+\frac 1{\sqrt{x}}\Big)^{-2} \tRe \sum_{n\le x} \frac{\Lambda(n)\chi(n)}{n} 
\Big(1-\frac{n}{x}\Big).
\end{align*}
The right hand side above is largest when $\chi(p)=1$ for all $p\le x$, and so 
\begin{equation*} 
\label{5.1} 
\log |L(1,\chi)| \le \sum_{n\le x} \frac{\Lambda(n)}{n\log n} \frac{\log (x/n)}{\log x} 
+\frac{1}{\log x} \sum_{n\le x} \frac{\Lambda(n)}{n} \Big(1-\frac{n}{x}\Big) 
+ \frac{\log q}{(\sqrt{x}+1)\log x}\Big(1+\frac{1}{\log x}\Big). 
\end{equation*}
Appealing now to Lemmas \ref{lem4} and \ref{lem6} we conclude that 
$$ 
\log |L(1,\chi)| \le \log \log x +\gamma -\frac{1}{\log x} + \frac{\log q}{\sqrt{x} \log x}\Big(1+\frac{1}{\log x}\Big).
$$ 
Choosing $x= (\log q)^2/4$, so that $x\ge 130$ for $q \ge 10^{10}$, we deduce that 
$$ 
|L(1,\chi)| \le e^{\gamma} \Big(\log x +1 + \frac{3.1}{\log x} \Big) 
\le 2e^{\gamma} \Big( \log \log q -\log 2 +\frac 12 + \frac{1}{\log \log q}\Big).
$$ 
This proves the stated upper bound for $|L(1,\chi)|$.

\subsection{Lower bounds for $|L(1,\chi)|$}  
The argument proceeds similarly to the one for upper bounds.  Let $q\geq 10^{10}$ be a positive integer. As before  choose $x=(\log q)^2/4$ so  that $x\geq 132$.
  Lemma \ref{propL1} gives
  \begin{align*}
\log |L(1,\chi)|& \ge \tRe \sum_{n\le x} \frac{\Lambda(n)\chi(n)}{n\log n} \frac{\log (x/n)}{\log x} 
+ \frac 1{2\log x} \Big( \log \frac{q}{\pi } +\psi_0 \Big(\frac{1+\frak a}{2}\Big)\Big) \\
&- \Big( \frac{1}{\log x} + \frac{2}{\sqrt{x}(\log x)^2}\Big) | \tRe (B(\chi))| - \frac{2}{x(\log x)^2}. 
\end{align*}
From Lemma \ref{lem3} (with the same value of $x$ above) we find 
that 
$$ 
|\tRe (B(\chi))| \le \Big(1-\frac{1}{\sqrt{x}}\Big)^{-2} \Big( \frac 12\Big(1-\frac 1x\Big) \log\frac{q}{\pi} 
- \tRe \sum_{n\le x} \frac{\Lambda(n)\chi(n)}{n} \Big(1-\frac nx\Big) + E_{\frak a}(x)\Big). 
$$ 
Now for $x\ge 100$ 
$$ 
-E_{\frak a}(x) \Big(1-\frac 1{\sqrt{x}}\Big)^{-2} 
\Big(\frac{1}{\log x}+\frac{2}{\sqrt{x}(\log x)^2} \Big) + \frac{1}{2\log x}\psi_0\Big(\frac{1+\frak a}{2}\Big) -\frac{2}{x(\log x)^2} \ge 0, 
$$ 
and therefore  
\begin{align}\label{lowerL1}
 \log |L(1,\chi)| &\ge \tRe \sum_{n\le x} 
\frac{\Lambda(n)\chi(n)}{n\log n} \frac{\log (x/n)}{\log x} 
- \frac{\log (q/\pi)}{(\sqrt{x}-1)\log x} \Big(1+\frac{1+1/\sqrt{x}}{\log x}\Big) \nonumber \\
&+ \Big(1-\frac 1{\sqrt{x}}\Big)^{-2} \Big(\frac 1{\log x}+\frac{2}{\sqrt{x}(\log x)^2} 
\Big) \tRe \sum_{n\le x}\frac{\Lambda(n)\chi(n)}{n} \Big(1-\frac nx\Big). 
\end{align}
 
From Lemma \ref{lem4} it follows that for $x\ge 132$,
$$
\tRe\sum_{n\le x}\frac{\Lambda(n)\chi(n)}{n} \Big(1-\frac nx\Big)\geq -\log x+1,
$$
and therefore, with a little calculation, 
$$ 
\Big( \Big( 1-\frac{1}{\sqrt{x}}\Big)^{-2} \Big(\frac{1}{\log x}+\frac{2}{\sqrt{x}(\log x)^2}\Big) - 
\frac{1}{\log x} \Big) \tRe \sum_{n\le x} \frac{\Lambda(n)\chi(n)}{n} \Big(1-\frac nx\Big) 
\ge - \frac{2}{\sqrt{x}}.
$$
Using this bound in \eqref{lowerL1}  we obtain 
\begin{align}
\label{eqn5.2}
\log |L(1,\chi)| &\ge \tRe\sum_{n\le x} \Lambda(n) \chi(n) 
\Big(\frac{1}{n\log n} -\frac{1}{x\log x} \Big) \nonumber \\
&- \frac{\log (q/\pi)}{(\sqrt{x}-1)\log x} 
\Big(1+\frac{1+1/\sqrt{x}}{\log x}\Big) -\frac{2}{\sqrt{x}}. 
\end{align}
The next lemma shows that the sum over $n$ above is smallest 
when $\chi(p)=-1$ for all $p\le x$.  

\begin{lemma}
\label{lemma5.1} 
For $x\ge 100$ we have 
$$ 
\tRe \sum_{n\le x} \Lambda(n) \chi(n) 
\Big( \frac{1}{n\log n} -\frac{1}{x\log x}\Big) 
\ge \sum_{p^k \le x} \Lambda(p^k) (-1)^k \Big( \frac{1}{p^k \log p^k} -\frac{1}{x\log x}\Big). 
$$ 
\end{lemma}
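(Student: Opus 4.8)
The plan is to use complete multiplicativity of $\chi$ (and of the Liouville function implicit on the right) to reduce the inequality to one statement per prime, and then to reduce each of those to the positivity of a trigonometric polynomial with geometrically decaying coefficients --- essentially the positivity underlying Littlewood's classical bound for $L(1,\chi)$.

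\emph{Reduction to a single prime.} Both sides are supported on prime powers. Writing $c_{p,k}=\frac{1}{k p^{k}}-\frac{\log p}{x\log x}$ for $1\le k\le K(p):=\lfloor \log x/\log p\rfloor$, the right side is $\sum_{p\le x}\sum_{k\le K(p)}c_{p,k}(-1)^{k}$ and the left side is $\tRe\sum_{p\le x}\sum_{k\le K(p)}c_{p,k}\chi(p)^{k}$, whose $p$-th summand depends only on $\chi(p)$. Hence it suffices to prove, for each prime $p\le x$ and with $K=K(p)$,
$$ \tRe\sum_{k=1}^{K}c_{p,k}z^{k}\ \ge\ \sum_{k=1}^{K}c_{p,k}(-1)^{k}\qquad (|z|\le 1). $$
I would first note two elementary facts: $c_{p,k}\ge 0$ for $1\le k\le K$ (from $p^{K}\le x$ and $K\log p\le\log x$ one gets $Kp^{K}\log p\le x\log x$, and $k\mapsto \tfrac{1}{kp^{k}}$ decreases), and $c_{p,k+1}\le \frac{k}{(k+1)p}\,c_{p,k}\le\tfrac12 c_{p,k}$. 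One may assume $p\le\sqrt{x}$, since otherwise $K=1$ and the claim is merely $\tRe z\ge -1$. As $z\mapsto \tRe\sum_{k}c_{p,k}z^{k}$ is harmonic, its minimum over $\{|z|\le1\}$ is attained on $|z|=1$; writing $z=e^{i\alpha}$ with $\alpha=\pi-2\delta$, the identity $\cos(k\alpha)-\cos(k\pi)=-2(-1)^{k}\sin^{2}(k\delta)$ shows the claim is equivalent to
$$ \sum_{k=1}^{K} c_{p,k}\,(-1)^{k+1}\sin^{2}(k\delta)\ \ge\ 0\qquad\text{for all }\delta. $$
(In particular this also covers the case $p\mid q$, where $z=\chi(p)=0$.)

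\emph{The main term and the case $p\ge3$.} The crucial input is the closed form, obtained by writing $\sin^{2}(k\delta)=\tfrac12(1-\cos 2k\delta)$ and summing the logarithmic series,
$$ \sum_{k\ge 1}\frac{(-1)^{k+1}\sin^{2}(k\delta)}{k p^{k}}\ =\ -\tfrac14\log\!\Big(1-\tfrac{4p\sin^{2}\delta}{(p+1)^{2}}\Big)\ \ge\ \frac{p\sin^{2}\delta}{(p+1)^{2}}\ \ge\ 0. $$
Truncating at $K$ alters this by at most $\sum_{k>K}\frac{1}{kp^{k}}<\frac{2}{(K+1)p^{K+1}}$, while the correction term $-\frac{\log p}{x\log x}\sum_{k\le K}(-1)^{k+1}\sin^{2}(k\delta)$ has modulus at most $\frac{\log p}{x\log x}\sum_{k\le K}\sin^{2}(k\delta)$. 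For $p\ge 3$ it is cleanest to bypass all this and argue directly: discard the odd-index terms with $k\ge 3$ (which are nonnegative) and bound $\sin^{2}(2j\delta)\le 4j^{2}\sin^{2}\delta$, reducing to $c_{p,1}\ge 4\sum_{j\ge1}j^{2}c_{p,2j}$; since $\sum_{j\ge1}j^{2}c_{p,2j}\le\sum_{j\ge1}\frac{j}{2p^{2j}}=\frac{p^{2}}{2(p^{2}-1)^{2}}$ whereas $c_{p,1}=\frac1p-\frac{\log p}{x\log x}>\frac{2p^{2}}{(p^{2}-1)^{2}}$ for all $p\ge 3$ (an elementary estimate, using $x\ge\max(100,p^{2})$), this disposes of every $p\ge 3$ with the leading term alone.

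\emph{The case $p=2$.} This is, I expect, the only genuinely delicate point: here $c_{2,1}=\tfrac12-\varepsilon$ and $c_{2,2}=\tfrac18-\varepsilon$ with $\varepsilon=\frac{\log 2}{x\log x}$, so the first two terms almost cancel, and one must split on the size of $|\delta|$. For $|\delta|$ small one uses $\sin^{2}(k\delta)\ge (k\delta)^{2}-\tfrac13(k\delta)^{4}$ on the odd-index terms and $\sin^{2}(k\delta)\le (k\delta)^{2}$ on the even ones, obtaining a lower bound $A\delta^{2}-C\delta^{4}$ with $A=\sum_{k\le K}(-1)^{k+1}k^{2}c_{2,k}$ and $C\le\tfrac13\sum_{k\ \textup{odd}}\frac{k^{3}}{2^{k}}$; the key point is $A>0$, which holds because $\sum_{k\ge1}(-1)^{k+1}\frac{k}{2^{k}}=\tfrac29$ dominates both the truncation error and the term $\varepsilon\sum_{k\le K}(-1)^{k+1}k^{2}$, once $x\ge100$ (so $K\ge 6$ and $\varepsilon$ is small). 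For $|\delta|$ bounded away from $0$ one uses instead the estimate of the previous paragraph, namely $\sum_{k\le K}c_{2,k}(-1)^{k+1}\sin^{2}(k\delta)\ge\frac29\sin^{2}\delta-\varepsilon\sum_{k\le K}\sin^{2}(k\delta)-\frac{2}{(K+1)2^{K+1}}$, and verifies that this is nonnegative there (again via $x\ge100$), the two ranges of $\delta$ overlapping. I anticipate that pinning down the split point and checking the resulting numerical inequalities for $p=2$ will be the main technical hurdle; everything else is soft.
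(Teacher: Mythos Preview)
Your proof is correct and follows the same line as the paper's: reduce to one prime at a time, write $\chi(p)=-e^{i\theta}$ so that the difference in contributions is $\sum_k (-1)^{k-1}(1-\cos k\theta)\,c_{p,k}$ (your $2\sin^2(k\delta)$ with $\delta=\theta/2$), and for $p\ge 3$ discard the odd terms $k\ge 3$ and bound the even ones via $1-\cos k\theta\le k^2(1-\cos\theta)$, reducing to the same numerical inequality $c_{p,1}\ge \sum_{j\ge1}(2j)^2 c_{p,2j}$.

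The only difference is at $p=2$. Rather than your small-$\delta$/large-$\delta$ split, the paper applies the same bound $0\le 1-\cos k\theta\le k^2(1-\cos\theta)$ for $k\ge 6$ and then checks by a short computer calculation that the remaining five-term trigonometric polynomial (together with the resulting $k\ge 6$ correction) is nonnegative for all $\theta$ and all $x\ge 100$. Your analytic approach via the logarithmic closed form and a Taylor expansion near $\delta=0$ would also go through, but the paper's shortcut sidesteps the need to locate a split point and carry out the overlapping-range verification you flagged as the main technical hurdle.
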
 
\begin{proof}  Consider the contribution of the powers of a single prime $p\le x$ to both sides of the 
inequality above; we claim that the contribution to the left hand side is at least as large as the contribution to the 
right hand side.  If $\chi(p) =0$ then the contribution to the left hand side is zero, and the contribution to 
the right hand side is negative.  If $\chi(p)\neq 0$ then writing $\chi(p) = -e(\theta)$, we see that the difference 
in the contributions to the left hand side and right hand side equals 
$$ 
(\log  p) \sum_{k\le \log x/\log p} (-1)^{k-1} (1-\cos (k\theta)) \Big( \frac{1}{p^k \log p^k } - \frac 1{x\log x}\Big). 
$$  
If $p\ge 3$ then using $(1-\cos(k\theta)) \le k^2 (1-\cos \theta)$ we see that the quantity above 
is 
$$ 
\ge (\log p) (1-\cos \theta) \Big( \frac{1}{p\log p} -\frac{1}{x\log x} - \sum_{j=1}^{\infty} \frac{(2j)^2}{p^{2j} \log p^{2j}}\Big) 
\ge 0. 
$$
For $p=2$ we use the bound $0\le (1-\cos (k\theta)) \le k^2(1-\cos \theta)$ for $k\ge 6$ and compute explicitly  
the trigonometric polynomial arising from the first five terms.  With a little computer calculation the lemma follows in this case. 
\end{proof}

Note that 
\begin{align}
\label{eqn5.3}
\sum_{p^k \le x}  \Lambda(p^k) (-1)^k& \Big( \frac{1}{p^k \log p^k} -\frac{1}{x\log x}\Big) 
= -\sum_{n\le x} \frac{\Lambda(n)}{n \log n} \frac{\log (x/n)}{\log x} \nonumber \\
&-\frac{1}{\log x} 
\sum_{n\le x} \frac{\Lambda(n)}{n} \Big( 1-\frac nx\Big) + 2 \sum_{m^2\le x} 
\Lambda(m) \Big(\frac{1}{m^2 \log m^2} -\frac{1}{x\log x}\Big). 
\end{align}
The third term above is
$$ 
\log \zeta(2) - \sum_{m^2>x} \frac{\Lambda(m)}{m^2\log m} -2 \sum_{m^2\le x} \frac{\Lambda(m)}{x\log x} 
\ge \log \zeta(2) - \frac{3}{2\sqrt{x}},   
$$
where we bound the sums over prime powers trivially by replacing them with sums over odd numbers and powers of $2$.  
The first two terms in \eqref{eqn5.3} are handled using Lemmas \ref{lem4} and \ref{lem6}.   Putting these 
together, and using $x\ge 132$, we conclude that 
$$ 
\sum_{p^k \le x}  \Lambda(p^k) (-1)^k \Big( \frac{1}{p^k \log p^k} -\frac{1}{x\log x}\Big) 
\ge - \log \log x - \gamma  + \log \zeta(2)  +\frac{1}{\log x} - \frac{8}{5\sqrt{x}}.
$$ 
Inserting this estimate and Lemma \ref{lemma5.1} into \eqref{eqn5.2}, and noting that $\log (q/\pi) \le 2\sqrt{x}-1$, 
we obtain with a little calculation 
$$ 
\log |L(1,\chi)| \ge - \log \log x - \gamma +\log \zeta(2) -\frac{1}{\log x} -\frac{2}{(\log x)^2} - \frac{4}{\sqrt{x}}.
$$ 
Exponentiating this, and using $x\ge 132$, we obtain the stated lower bound.

\section{Asymptotic bounds} 

\noindent  Let  $\delta$ be a fixed positive real number, and 
let $K(s)$ denote a function holomorphic in a 
region containing $-1/2 - \delta <  \text{Re}(s) \le \frac 12+\delta$, save for possibly a simple pole at $s = -1/2$ with residue $r$.
 Further suppose that for all $s$ in this region bounded away from $-1/2$ we have 
$|K(s)|\ll 1/(1+|s|^2)$. 
For $\xi >0$ define the inverse Mellin transform 
$$ 
{\widetilde K}(\xi) = \frac{1}{2\pi i} \int_{c-i\infty}^{c+i\infty} K(s) \xi^{-s} ds,
$$ 
where the integral is over any vertical line with $-\frac 12 < c\leq \frac 12 + \delta$.  If $\xi \ge 1$ then by taking the $c=1/2$ in 
the integral above, and using $|K(s)|\ll 1/(1+|s|^2)$, we find that ${\widetilde K}(\xi) \ll 1/\sqrt{\xi}$.  If $\xi\le 1$ then moving the line of integration to $c=-1/2-\delta/2$ (encountering potentially a pole at $-1/2$) we find that ${\widetilde K}(\xi) \ll \sqrt{\xi}$.    Thus 
%Since $K$ is even it follows that ${\widetilde K}(\xi) = {\widetilde K}(1/\xi)$.   
%Further note that for any $-\frac 12 -\delta \le \sigma \le \frac 12 +\delta$, since $|K(s)|\ll 1/(1+|s|^2)$, 
% we have $|{\widetilde K}(\xi)| \ll \xi^{\sigma}$; thus 
\begin{equation} 
\label{6.1}
|{\widetilde K}(\xi)| \ll \min( \xi,1/\xi)^{\frac 12}.
\end{equation} 
Finally we assume that $K$ is such that ${\widetilde K}(\xi) >0$ for all $\xi >0$.

As before, let $G = (\mathbb{Z}/q\mathbb{Z})^*$ and $H$ be a proper subgroup of $G$.  Let $X$ be such that all primes $\ell \nmid q$ with $\ell \le X$ lie in the 
subgroup $H$.  We may assume that $X \gg (\log q)^2$.   Further, let ${\tilde H}$ denote the group of characters 
 $\chi \pmod q$ with $\chi(n)=1$ for all $n\in H$.   Recall that $|\tilde H|=
 [G:H]=h$, and that  the orthogonality relation \eqref{cosorth} holds.  
 
 \begin{lemma} 
 \label{lemma6.1}  Assume GRH.  
 Let $\chi \pmod q$ be in ${\tilde H}$.   If $\chi$ is the principal character 
 then 
  $$ 
 \sum_{n} \frac{\Lambda(n) \chi(n)}{\sqrt{n}} {\tilde K}(n/x) = K(1/2){\sqrt{x}} + 
 O\Big(1+ \log q\frac{\log x}{\sqrt{x}} \Big). 
 $$ 
 If $\chi$ is non-principal then  
 $$ 
\textup{Re} \sum_{n} \frac{\Lambda(n)\chi(n)}{\sqrt{n}} {\tilde K}(n/x) = \theta (1+o(1)) 
\frac{\log q}{2\pi } \int_{-\infty}^{\infty} |K(it)|dt  + O\Big(1+ \log q \frac{\log x}{\sqrt{x}}\Big). 
 $$ 
 \end{lemma}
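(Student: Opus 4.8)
The plan is to convert the sum into a contour integral of $-\frac{L'}{L}(\tfrac12+s,\chi)$ against $K$, push the line of integration across the critical line — this is where GRH is used — and then read off the pole of $L$ in the principal case and the contribution of the zeros in the non‑principal case. First I would reduce to $\chi$ primitive: if $\tilde\chi\pmod{\tilde q}$ induces $\chi$, then the sum for $\chi$ and the sum for $\tilde\chi$ differ by $-\sum_{p|q,\,p\nmid\tilde q}\sum_{k\ge1}\frac{\log p}{p^{k/2}}\widetilde K(x/p^k)$, which by the decay bound \eqref{6.1} and $\sum_{p|q}\log p\le\log q$ is $\ll\log q/\sqrt x=O(1)$ since $x\gg(\log q)^2$; the principal character reduces in the same way to the $\zeta$‑case, and one may replace $\log\tilde q$ by $\log q$ throughout. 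Assuming $\chi$ primitive, for $\tRe s=\tfrac12+\delta$ the Dirichlet series $\sum_n\Lambda(n)\chi(n)n^{-1/2-s}$ converges absolutely, so inserting $\widetilde K(x/n)=\frac1{2\pi i}\int_{(1/2+\delta)}K(s)(x/n)^s\,ds$ and interchanging the sum and the integral (legitimate by absolute convergence, using $|K(s)|\ll(1+|s|^2)^{-1}$) gives
$$\sum_n\frac{\Lambda(n)\chi(n)}{\sqrt n}\widetilde K(x/n)=\frac1{2\pi i}\int_{(1/2+\delta)}K(s)\Big(-\frac{L'}{L}\big(\tfrac12+s,\chi\big)\Big)x^s\,ds .$$

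Next I would move the line of integration to $\tRe s=-\tfrac12-\delta$. On GRH every pole arising from a nontrivial zero of $L(s,\chi)$ lies exactly on $\tRe s=0$, strictly between the two lines, so neither line passes through a pole; killing the horizontal segments along heights $T\to\infty$ that avoid zeros (via $|K|\ll(1+|s|^2)^{-1}$ and the standard bounds $\frac{L'}{L}(\sigma+iT,\chi)\ll\log^2(q(|T|+2))$ away from zeros), the residue theorem gives, with $\rho_\chi=\tfrac12+i\gamma_\chi$ of multiplicity $m_{\rho_\chi}$,
$$\sum_n\frac{\Lambda(n)\chi(n)}{\sqrt n}\widetilde K(x/n)=R-\sum_{\rho_\chi}m_{\rho_\chi}K(i\gamma_\chi)x^{i\gamma_\chi}+\frac1{2\pi i}\int_{(-1/2-\delta)}K(s)\Big(-\frac{L'}{L}\big(\tfrac12+s,\chi\big)\Big)x^s\,ds ,$$
where $R$ consists of the residue at $s=\tfrac12$ (present only in the principal case, equal to $\frac{\sqrt x}{2}K(\tfrac12)$) together with the residue at a possible trivial zero at $s=-\tfrac12$, the latter being $\ll\omega(q)/\sqrt x\ll\log q/\sqrt x=O(1)$. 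For the last integral, on $\tRe s=-\tfrac12-\delta$ (write $s=-\tfrac12-\delta+it$) the logarithmic derivative of the functional equation $\xi(s,\chi)=\epsilon_\chi\xi(1-s,\overline\chi)$ gives $-\frac{L'}{L}(\tfrac12+s,\chi)=\log\frac{q}{\pi}+\frac{L'}{L}(\tfrac12-s,\overline\chi)+O(\log(|t|+2))$ with the middle term $O(1)$ (its argument has real part $1+\delta$), so the integral is $\ll x^{-1/2-\delta}\int_{-\infty}^{\infty}|K(-\tfrac12-\delta+it)|\big(\log q+\log(|t|+2)\big)\,dt\ll\log q/\sqrt x=O(1)$. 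This already proves the principal‑character claim, and reduces the non‑principal case to bounding $\sum_{\rho_\chi}m_{\rho_\chi}|K(i\gamma_\chi)|$, since $\big|\tRe\sum_{\rho_\chi}m_{\rho_\chi}K(i\gamma_\chi)x^{i\gamma_\chi}\big|\le\sum_{\rho_\chi}m_{\rho_\chi}|K(i\gamma_\chi)|$.

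The step I expect to be the main obstacle is proving $\sum_{\rho_\chi}m_{\rho_\chi}|K(i\gamma_\chi)|=(1+o(1))\frac{\log q}{2\pi}\int_{-\infty}^{\infty}|K(it)|\,dt$: obtaining the leading constant \emph{exactly} (rather than only up to a bounded factor) forces one to control the error term in the zero‑counting, not merely its order. Letting $N^*(t,\chi)$ count the zeros (with multiplicity) of ordinate up to $t$, the Riemann–von Mangoldt formula gives $N^*(t,\chi)=M(t)+E(t)$ with $M'(t)=\frac1{2\pi}\log\frac{q|t|}{2\pi}$ and — crucially on GRH — $E(t)\ll\log(q(|t|+2))/\log\log(q(|t|+2))$, the last being the bound on $S(t,\chi)=\frac1\pi\arg L(\tfrac12+it,\chi)$. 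Then $\sum_{\rho_\chi}m_{\rho_\chi}|K(i\gamma_\chi)|=\int|K(it)|\,dN^*(t,\chi)=\int|K(it)|M'(t)\,dt-\int E(t)\,\frac{d}{dt}|K(it)|\,dt$; the first term equals $\frac{\log q}{2\pi}\int|K(it)|\,dt+O(1)$, since the leftover $\frac1{2\pi}\int|K(it)|\log\frac{|t|}{2\pi}\,dt$ converges as $|K(it)|\ll(1+t^2)^{-1}$, while the second, using the Cauchy estimate $|K'(it)|\ll(1+t^2)^{-1}$ and splitting into $|t|$ small and $|t|$ large, is $\ll\frac{\log q}{\log\log q}=o(\log q)$. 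Combining this with the $O(1)=O(1+\log q\log x/\sqrt x)$ accumulated from the other terms completes the non‑principal case; for the principal character the analogous sum is over the $\zeta$‑zeros, whose density is $q$‑independent, so it is simply $O(1)$, which feeds back into the principal‑character bound.
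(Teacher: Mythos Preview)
Your argument is correct and follows the same contour-shifting strategy as the paper. The main technical difference is that the paper integrates $-\frac{\xi'}{\xi}(\tfrac12+s,\tilde\chi)$ against the \emph{symmetrized} kernel $K(s)(x^s+x^{-s})$: after shifting to $\tRe s=-\tfrac12-\delta$, the functional equation and a change of variable turn the new integral into exactly $-\overline{I}$, so $2\,\tRe I$ equals precisely $-2\sum_\gamma K(i\gamma)\cos(\gamma\log x)$ and one never has to estimate the integral on the far line. You instead integrate $-\frac{L'}{L}$ against $K(s)x^s$ alone and bound the shifted integral directly via the functional equation; this is equivalent but costs one extra estimate (and your residue at the trivial zero should carry a factor $1$, not $\omega(q)$, once $\chi$ is primitive --- harmless for the bound). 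For the asymptotic $\sum_\gamma|K(i\gamma)|=(1+o(1))\frac{\log q}{2\pi}\int|K(it)|\,dt$, the paper simply cites the explicit formula (Theorem~5.12 of \cite{IK}); your partial-summation argument with the GRH bound $S(t,\chi)\ll\log(q(|t|+2))/\log\log(q(|t|+2))$ is a correct way to fill this in, and makes explicit why the unconditional $O(\log qT)$ error in the zero count would not suffice to pin down the leading constant.
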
 
 \begin{proof}   Consider first the case when $\chi$ is non-principal.  Let ${\tilde \chi} \pmod{\tilde q}$ denote 
 the primitive character that induces $\chi$ and let 
 $\xi(s,\tilde{\chi})$ denote the corresponding completed $L$-function. We 
 start with  
 \begin{equation} 
 \label{6.2} 
I= \frac{1}{2\pi i} 
 \int_{(1/2+\delta)} -\frac{\xi^{\prime}}{\xi}(s+\tfrac 12,\tilde \chi) K(s) x^s  ds,
 \end{equation}
 which we evaluate, as usual, in two ways.  Note that 
 $$ 
 \frac{1}{2\pi i} \int_{(1/2+\delta)} \Big( -\frac 12 \log \frac{\tilde q}{\pi}  \Big) K(s) x^s ds   \ll \frac{\log q}{\sqrt{x}},
 $$
 upon shifting the integral to $\tRe s = -1/2 -\delta$ and possibly picking up a pole at $s=-1/2$.
Further, by moving the line of integration to Re$(s)=0$, we find that 
$$ 
 \frac{1}{2\pi i} \int_{(1/2+\delta)} -\frac 12 \frac{\Gamma^{\prime}}{\Gamma}\Big(
 \frac{1/2+s+{\frak a}}{2}\Big) K(s) x^s ds \ll \int_{-\infty}^{\infty} \log (2+|t|) |K(it)| dt \ll 1.
 $$
Now by expanding $-L^{\prime}/L$ into its Dirichlet series we have 
 $$ 
 \frac{1}{2\pi i} \int_{(1/2+\delta)} -\frac{L'}{L}(\tfrac 12+s,\tilde\chi) K(s) x^s ds =  \sum_{n} \frac{\Lambda(n){\tilde \chi}(n)}{\sqrt{n}} {\tilde K}(n/x).
 $$ 
 Note that 
 \begin{align*}
 \sum_{n} \frac{\Lambda(n){\tilde \chi}(n)}{\sqrt{n}} {\tilde K}(n/x) 
- \sum_{n} \frac{\Lambda(n)\chi(n)}{\sqrt{n}} {\tilde K}(n/x)
 \ll \sum_{(n,q)>1} \frac{\Lambda(n)}{\sqrt{n}} |{\tilde K}(n/x)|, 
\end{align*}
 and using \eqref{6.1} this is 
 $$ 
 \ll \sum_{p|q} (\log p) \sum_{k=1}^{\infty} \frac{1}{p^{k/2}} \min \Big( \frac{\sqrt{x}}{p^{k/2}}, 
\frac{p^{k/2}}{\sqrt{x}}\Big) \ll \sum_{p|q} (\log p) \Big(\frac{1}{\sqrt{x}} + \frac{\log x}{\sqrt{x}\log p}\Big)\ll (\log q) \frac{\log x}{\sqrt{x}}. 
 $$
 From these observations, we conclude that  
 \begin{equation} 
 \label{6.3} 
 I=\sum_{n} \frac{\Lambda(n)\chi(n)}{\sqrt{n}} {\tilde K}(n/x) + O\Big(1+\log q \frac{\log x}{\sqrt{x}}\Big). 
 \end{equation} 
 
 Now evaluate the integral in \eqref{6.2} by shifting the line of integration to Re$(s)=- 1/2-\delta/2$.  Thus, with $\gamma$ running over the ordinates of zeros of $\xi(s,\tilde \chi)$,
 $$ 
 I = -\sum_{\gamma} K(i\gamma) x^{i\gamma}  -r\frac{\xi^{\prime}}{\xi}(0,\tilde \chi)x^{-1/2} + \frac{1}{2\pi i} \int_{(-1/2-\delta/2)} 
  -\frac{\xi^{\prime}}{\xi}(\tfrac 12+s,\tilde \chi)K(s) x^s ds. 
 $$
 Using now the functional equation for $\xi$, we 
 find that the integral on the right hand side is bounded by $\ll x^{-1/2-\delta/2}$.  
 
  Recall that $\tRe \frac{\xi^{\prime}}{\xi}(0,\tilde \chi) \ll \log q$, so that
  $$ 
  \text{Re}(I) = \theta \sum_{\gamma} |K(i\gamma)| + O\bfrac{\log q}{\sqrt{x}} = \theta (1+o(1)) \frac{\log {\tilde q}}{2\pi} 
  \int_{-\infty}^{\infty} |K(it)|dt + O\bfrac{\log q}{\sqrt{x}},  
  $$
 where the final estimate follows from an application of the explicit formula (see Theorem 5.12 of \cite{IK}).  
 This establishes our lemma for non-principal characters.  For principal characters, we have that
  \begin{align*}
  \sum_{n} \frac{\Lambda(n) }{\sqrt{n}} {\tilde K}(n/x) 
  &= \frac{1}{2\pi i}\int_{(1/2 + \delta/2)}-\frac{\zeta'}{\zeta}(1/2+s)K(s)x^s ds\\
  &=K(1/2)\sqrt{x} - \frac{1}{2\pi i}\int_{(-1/2-\delta/2)}\frac{\zeta'}{\zeta}(1/2+s)K(s)x^s ds - \sum_{\gamma} K(i\gamma) x^{i\gamma}\\
  &= K(1/2) \sqrt{x} + O(1).
  \end{align*}
  On the other hand, similar to before
 \begin{align*}
 \sum_{n} \frac{\Lambda(n){\chi_0}(n)}{\sqrt{n}} {\tilde K}(n/x) 
- \sum_{n} \frac{\Lambda(n)}{\sqrt{n}} {\tilde K}(n/x)
 \ll (\log q) \frac{\log x}{\sqrt{x}}. 
 \end{align*}
 From these observations, we conclude that  
$$
 I=\sum_{n} \frac{\Lambda(n)\chi(n)}{\sqrt{n}} {\tilde K}(n/x) + O\Big(1+\log q \frac{\log x}{\sqrt{x}}\Big). 
$$
 \end{proof}

\begin{prop}\label{prop:theoretical}  Assume GRH.  Keep the notations above, and recall that $X$ is such that 
all primes $\ell \nmid q$ with $\ell \le X$ lie in the subgroup $H$.  Then, for any $\lambda>0$ 
we have 
$$ 
\Big(h \int_0^{\lambda} {\tilde K}(u) 
\frac{du}{\sqrt{u}} - K(1/2)\Big) 
\sqrt{X} \le (1+o(1)) \sqrt{\lambda} (h-1) (\log q) \Big(\frac{1}{2\pi} \int_{-\infty}^{\infty} |K(it)|dt \Big). 
$$  
\end{prop}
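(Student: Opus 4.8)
The plan is to combine the orthogonality relation \eqref{cosorth} with Lemma \ref{lemma6.1}, bounding the relevant sum over $n\in H$ from above using the zero–counting input of Lemma \ref{lemma6.1}, and from below by keeping only the primes $\ell\le X$, all of which lie in $H$ by hypothesis. Fix $\lambda>0$ and set $x=X/\lambda$; since $X\gg(\log q)^2$ we have $x\to\infty$ with $q$, and $\sqrt X=\sqrt\lambda\,\sqrt x$. Applying \eqref{cosorth} with $a=1$ gives the identity
\[
\sum_{n\in H}\frac{\Lambda(n)}{\sqrt n}\widetilde K(x/n)=\frac1h\sum_{\chi\in\widetilde H}\,\sum_{n}\frac{\Lambda(n)\chi(n)}{\sqrt n}\widetilde K(x/n),
\]
and since $\widetilde K>0$ the left side is a sum of nonnegative terms, hence equals its real part.

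For the upper bound I would apply Lemma \ref{lemma6.1} to each $\chi\in\widetilde H$. The principal character contributes $\tfrac1h\big(K(1/2)\tfrac{\sqrt x}{2}+O(1+\log q\cdot\tfrac{\log x}{\sqrt x})\big)$, and each of the $h-1$ non-principal characters contributes in real part at most $\tfrac1h\big((1+o(1))\tfrac{\log q}{2\pi}\int_{-\infty}^{\infty}|K(it)|\,dt+O(1+\log q\cdot\tfrac{\log x}{\sqrt x})\big)$, using $|\theta|\le1$. Summing over $\chi\in\widetilde H$ and dividing by $h$, the error terms together contribute $O(1+\log q\cdot\tfrac{\log x}{\sqrt x})=o(\sqrt X)$, so
\[
\sum_{n\in H}\frac{\Lambda(n)}{\sqrt n}\widetilde K(x/n)\le\frac1h\Big(\frac{K(1/2)}{2}\sqrt x+(h-1)(1+o(1))\frac{\log q}{2\pi}\int_{-\infty}^{\infty}|K(it)|\,dt\Big)+o(\sqrt X).
\]

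For the lower bound I would use that every prime $\ell\le X$ with $\ell\nmid q$ lies in $H$; since every term is nonnegative, discarding all prime powers except these primes yields $\sum_{n\in H}\tfrac{\Lambda(n)}{\sqrt n}\widetilde K(x/n)\ge\sum_{p\le X,\,p\nmid q}\tfrac{\log p}{\sqrt p}\widetilde K(x/p)$. The primes dividing $q$ can be reinstated at a cost of $O(\log q/\sqrt x)$, using \eqref{6.1} together with $\sum_{p\mid q}\log p\le\log q$. Then Riemann--Stieltjes partial summation against $\vartheta(t)=\sum_{p\le t}\log p=t+o(t)$ (the prime number theorem; RH would give a sharper error), followed by the change of variables $u=t/x$ and the identity $\widetilde K(1/u)=\widetilde K(u)$, evaluates
\[
\sum_{p\le X}\frac{\log p}{\sqrt p}\widetilde K(x/p)=\sqrt x\int_{2/x}^{\lambda}\widetilde K(u)\frac{du}{\sqrt u}+o(\sqrt X)=\sqrt x\int_0^{\lambda}\widetilde K(u)\frac{du}{\sqrt u}+o(\sqrt X),
\]
where \eqref{6.1} is what makes the contribution of $\vartheta(t)-t$ (integrated against the smooth kernel $\widetilde K(x/t)/\sqrt t$) negligible, and also makes the part of the $u$-integral near $0$ negligible. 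Hence $\sum_{n\in H}\tfrac{\Lambda(n)}{\sqrt n}\widetilde K(x/n)\ge\sqrt x\int_0^{\lambda}\widetilde K(u)\tfrac{du}{\sqrt u}-o(\sqrt X)$.

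Finally I would compare the two bounds, multiply through by $h$, and absorb the error terms: when $h\int_0^\lambda\widetilde K(u)\tfrac{du}{\sqrt u}-\tfrac{K(1/2)}{2}\le0$ the asserted inequality is immediate, and otherwise the left-hand side below is of larger order than the errors, so that
\[
\Big(h\int_0^{\lambda}\widetilde K(u)\frac{du}{\sqrt u}-\frac{K(1/2)}{2}\Big)\sqrt x\le(1+o(1))(h-1)\,\frac{\log q}{2\pi}\int_{-\infty}^{\infty}|K(it)|\,dt.
\]
Multiplying by $\sqrt\lambda$ and using $\sqrt X=\sqrt\lambda\,\sqrt x$ gives the Proposition. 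The one genuinely delicate point is obtaining the \emph{exact} constant $\int_0^\lambda\widetilde K(u)\tfrac{du}{\sqrt u}$ in the evaluation of $\sum_{p\le X}\tfrac{\log p}{\sqrt p}\widetilde K(x/p)$: crude majorization via \eqref{6.1} would lose this constant, yet it is precisely this constant (after optimizing over $\lambda$ and the kernel $K$) that produces the sharp bounds of Theorems \ref{thmtheoretical1} and \ref{thmtheoretical2}. So the bulk of the work lies in the partial-summation step, ensuring that the error terms there are truly $o(\sqrt X)$.
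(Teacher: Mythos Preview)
Your proposal is correct and follows essentially the same route as the paper: apply Lemma~\ref{lemma6.1} with $x=X/\lambda$ to each $\chi\in\tilde H$, sum using orthogonality, and compare against a lower bound coming from the primes $\le X$ (all of which lie in $H$) evaluated by partial summation and the prime number theorem.

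The only cosmetic difference is that the paper keeps \emph{all} prime powers $n\le X$ with $(n,q)=1$ in the lower bound (these automatically lie in $H$ since $H$ is a subgroup) and writes the resulting sum directly as $\int_1^X \tilde K(x/t)\,dt/\sqrt t+o(\log q)$, whereas you discard to primes and invoke $\vartheta(t)=t+o(t)$; the difference is negligible. Your error terms are stated as $o(\sqrt X)$ while the paper uses $o(h\log q)$; your bootstrap remark (when the bracket is positive the left side dominates the errors, forcing $\sqrt X\ll\log q$, whence $o(\sqrt X)=o(\log q)$) is exactly what is needed to reconcile the two, so this is fine.
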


\begin{proof}  We may assume that $X\gg (\log q)^2$.  We shall use Lemma \ref{lemma6.1} for each character $\chi \in {\tilde H}$, 
and taking $x=X/\lambda$.   Thus we obtain that 
$$ 
\sum_{\chi \in{\tilde H}} \text{Re} \sum_{n}\frac{ \Lambda(n)\chi(n)}{\sqrt{n}} {\tilde K}(n/x) 
\le K(1/2)\sqrt{x} +  (1+o(1))(h-1) \log q \Big(\frac{1}{2\pi} \int_{-\infty}^{\infty} |K(it)|dt\Big).
$$
From \eqref{cosorth}, and since ${\tilde K}(n/x)\ge 0$ for all $n$, we see that the left hand side above is 
$$ 
\ge h \sum_{n\le X} \frac{\Lambda(n)}{\sqrt{n}} {\tilde K}(n/x) +O\Big(h\sum_{(n,q)>1} \frac{\Lambda(n)}{\sqrt{n}} {\tilde K}(n/x)\Big) 
= h \int_{1}^{X}{\tilde K}(t/x) \frac{dt}{\sqrt{t}} + o(h\log q). 
$$ 
After a change of variables $u=t/x$, the 
above becomes 
$$ 
h\sqrt{x} \int_0^{\lambda} {\tilde K}(u)\frac{du}{\sqrt{u}} + o(h\log q). 
$$ 
The proposition follows with a little rearrangement.  
 \end{proof}

\subsection{Limitations of the method}  
 
By Mellin inversion we have that 
$K(1/2) = \int_0^{\infty} {\tilde K}(u) du/\sqrt{u}$, and note also that 
${\tilde K}(u) \le \frac{1}{2\pi} \int_{-\infty}^{\infty} |K(it)|dt$.  Therefore 
$$ 
h\int_0^{\lambda} {\tilde K}(u)\frac{du}{\sqrt{u}} - 
{K(1/2)}\le (h-1) \int_0^{\lambda} {\tilde K}(u)\frac{du}{\sqrt{u}} 
\le (2h-2) \sqrt{\lambda} \frac{1}{2\pi} \int_{-\infty}^{\infty} |K(it)| dt.  
$$ 
Hence Proposition \ref{prop:theoretical} cannot lead to a 
bound for $X$ that is better than $(\frac{1}4+o(1))(\log q)^2$.

\subsection{Bounds for  large $h$}  Take $K(s) =(\frac{ (e^{\alpha s}-e^{-\alpha s})}{s})^2$ and $\lambda=1$.  
Note that the inverse Mellin transform for $K$ is  ${\tilde K}(u) = \max(0, 2\alpha -|\log u|)$.     
Now 
$$ 
\frac{1}{2\pi} \int_{-\infty}^{\infty} |K(it)|dt = \frac{1}{2\pi} \int_{-\infty}^{\infty} \Big(\frac{2\sin(\alpha t)}{t}\Big)^2 dt =  2\alpha,
$$ 
and 
$$ 
\int_0^1 {\tilde K}(u) \frac{du}{\sqrt{u}} = \int_{e^{-2\alpha}}^1 \frac{2\alpha+\log u}{\sqrt{u}} du 
= 4\alpha-4+4e^{-\alpha}. 
$$ 
Thus Proposition \ref{prop:theoretical} implies that 
$$ 
\Big( h(4\alpha-4+4e^{-\alpha}) - 4(e^{\alpha/2}-e^{-\alpha/2})^2\Big) 
\sqrt{X} \le (1+o(1)) (2\alpha (h-1)) (\log q). 
$$ 
Selecting $\alpha = \frac 12 \log (2h)$, it follows that for $h\ge 4$ 
$$ 
X\le \Big(\frac 14+o(1)\Big)  \Big(1-\frac 1h\Big)^2 \Big(\frac{\log (2h)}{\log (2h)-4}\Big)^2(\log q)^2.
$$
For large $h$, the above bound is about $(1/4+o(1)) (\log q)^2$ which is 
of the same quality as the limit of the method, but the convergence to this limit is 
quite slow.  

\subsection{Bounds for small $h$} For smaller values of $h$, 
better estimates may be obtained using the kernel $K(s) = \Gamma(s+1/2)$.  Note that $K(s)$ satisfies the conditions stated at the beginning of this section and that ${\tilde K}(u) = \sqrt{u} e^{-u} \geq 0$ for all $u$.  Noting that $K(1/2) = \Gamma(1) = 1$, we apply Proposition \ref{prop:theoretical} to see that
$$\left( h \int_0^\lambda e^{-u} du - 1\right) \sqrt{X} \le \sqrt{\lambda} (h-1) \frac{\log q}{2\pi} \int_{-\infty}^\infty \left|\Gamma(1/2 + it) \right| dt, 
$$so that
\begin{equation*}
\sqrt{X} \le \frac{\sqrt{\lambda}(h-1) \log q}{2\pi (h-1-he^{-\lambda})} \int_{-\infty}^\infty \left|\Gamma(1/2 + it) \right| dt.
\end{equation*}

When $h=2$, we choose $\lambda = 2.452$, which is more or less optimal, and 
find that $X < (0.794+o(1))(\log q)^2$.  For $h=3$, we choose $\lambda = 2.025$, and find that $X < (0.7+o(1))(\log q)^2$.  For $h = 4$, we choose $\lambda= 1.825$ and get that $X < (0.66+o(1))(\log q)^2$.  We may apply this for other smaller values of $h$ and get progressively better bounds as $h$ increases, with $0.545(\log q)^2$ being the limit for this test function.

 \section*{Aknowledgements}
We thank Emanuel Carneiro and Micah Milinovich for drawing our attention to an error in Lemma 6.1 of the previous version of the paper, which affects the asymptotic bounds in Theorems 1.2 and 1.3 there. These results are corrected in this updated version.

\end{document}